\let\epsilon=\varepsilon
\newtheorem{theorem}{Theorem}[section]
\newtheorem{lemma}[theorem]{Lemma}
\newtheorem{proposition}[theorem]{Proposition}
\theoremstyle{definition}
\newtheorem{remark}[theorem]{Remark}
\newcommand{\kommentaar}[1]{\begin{item}\item[]{\small\textsf{#1}}\end{itemize}}
\let\epsilon=\varepsilon
\let\phi=\varphi
\let\theta=\vartheta
\def\R{\mathbb{R}}
\newcommand{\comment}[1]{}
\newcommand{\spn}{\mathrm{span}}
\def\@maketitle{%
  \newpage
  \null
  \vskip 2em%
  \begin{center}%
  \let \footnote \thanks
    {\Large\bfseries \@title \par}%
    \vskip 1.5em%
    {\normalsize
      \lineskip .5em%
      \begin{tabular}[t]{c}%
        \@author
      \end{tabular}\par}%
    \vskip 1em%
    {\normalsize \@date}%
  \end{center}%
  \par
  \vskip 1.5em}
\begin{document}

\title{ \LARGE An order theoretic characterization of spin factors}

\author{Bas Lemmens%
\thanks{Email: \texttt{B.Lemmens@kent.ac.uk}}}
\affil{School of Mathematics, Statistics \& Actuarial Science,
University  of Kent, Canterbury, Kent CT2 7NX, UK.}

\author{Mark Roelands%
\thanks{Email: \texttt{mark.roelands@gmail.com}}}
\affil{Unit for BMI, North-West University, Private Bag X6001-209, Potchefstroom 2520, South Africa.}

\author{Hent van Imhoff%
\thanks{Email: \texttt{h.van.imhoff@math.leidenuniv.nl}}}
\affil{Mathematical Institute, Leiden University, 
P.O. Box 9512, 2300 RA Leiden, the Netherlands.}

\date{}
\maketitle

\begin{abstract} 
The famous Koecher-Vinberg theorem characterizes the Euclidean Jordan algebras among the finite dimensional order unit spaces as the ones that have a symmetric cone.  Recently Walsh gave an alternative characterization of the Euclidean Jordan algebras. He showed that the Euclidean Jordan algebras correspond to the finite dimensional order unit spaces $(V,C,u)$ for which there exists a bijective map $g\colon C^\circ\to C^\circ$ with the property that $g$ is antihomogeneous,   
i.e., $g(\lambda x) =\lambda^{-1}g(x)$ for all $\lambda>0$ and $x\in C^\circ$, and $g$ is an order-antimorphism, i.e., $x\leq_C y$ if and only if $g(y)\leq_C g(x)$. In this paper we make a first step towards  extending this order theoretic characterization to infinite dimensional JB-algebras. We show that if $(V,C,u)$ is a complete order unit space with a strictly convex cone and $\dim V\geq 3$, then there exists a bijective antihomogeneous order-antimorphism $g\colon C^\circ\to C^\circ$ if and only if $(V,C,u)$ is a spin factor.
\end{abstract}

{\small {\bf Keywords:} Spin factors, order-antimorphisms, order unit spaces, JB-algebras, symmetric Banach-Finsler manifolds}

{\small {\bf Subject Classification:} Primary 17C65; Secondary 46B40}

\section{Introduction}\label{sec1}
Let $C$ be a cone in a real vector space $V$, so $C$ is convex, $\lambda C\subseteq C$ for all $\lambda \geq 0$ and $C\cap -C =\{0\}$. Then $C$ induces a partial ordering $\leq_C$ on $V$ by $x\leq_Cy$ if $y-x\in C$. Recall that $C$ is {\em Archimedean} if for each $x\in V$  and $y\in C$ with $nx\leq_C y$ for all $n=1,2,\ldots$, we have that $x\leq_C0$. Moreover, $u\in C$ is said to be an {\em order unit} if for each $x\in V$ there exists $\lambda\geq 0$ such that $x\leq_C \lambda u$. The triple $(V,C,u)$ is called an {\em order unit space} if $C$ is an Archimedean cone with order unit $u$. An order unit space can be equipped with the so called {\em order unit norm}, 
\[
\|x\|_u:=\inf\{\lambda>0\colon -\lambda u\leq_C x\leq_C\lambda u\}.
\] 
With respect to the order unit norm topology the cone $C$ is closed and has nonempty interior, denoted by $C^\circ$. 
In the paper we will study order unit spaces  that are complete with respect to $\|\cdot\|_u$ and which have a strictly convex cone. Recall that  a cone $C$ is {\em strictly convex} if for each linearly independent $x,y\in\partial C$, the segment $\{(1-\lambda) x+\lambda y\colon 0<\lambda<1\}$ is contained in $C^\circ$. 

An important class of complete order unit spaces are JB-algebras (with unit). A {\em Jordan algebra over $\R$} is a real vector space $A$ equipped with a commutative bilinear product $\circ$ that satisfies 
\[
a^2\circ (a\circ b) = a\circ (a^2\circ b)\mbox{\quad for all }a,b\in A.
\]
A {\em  (unital) JB-algebra} $A$ is a normed, complete Jordan algebra over $\R$ with unit $e$ satisfying
\[
\|a\circ b\|\leq\|a\|\|b\|,\quad \|a^2\|=\|a\|^2,\mbox{\quad and }\|a^2\|\leq \|a^2+b^2\|
\mbox{\quad for all $a,b\in A$}.\] 
A JB-algebra $A$ gives rise to a complete order unit space, where the cone $A_+$ is the set of squares $\{a^2\colon a\in A\}$, the unit $e$ is an order unit, and $\|\cdot\|_e$ coincides with norm of $A$, see \cite[Theorem 1.11]{AS2}. A special class of JB-algebras are spin factors. A {\em spin factor} $M$ is a real vector  space with $\dim M\geq 3$ such that $M=H\oplus \R e$ (vector space direct sum) with $(H,(\cdot\mid\cdot))$  a Hilbert space and $\R e$ the linear span of $e$, where $M$ is  given the  Jordan product
\begin{equation}\label{spin}
(a+\alpha e)\circ (b+\beta e) = \beta a +\alpha b +( (a\mid b)+\alpha\beta )e
\end{equation} 
 and norm $\|a+\lambda e\|:=\|a\|_2+|\lambda|$, with $\|\cdot\|_2$  the norm of $H$. 

The famous Koecher-Vinberg theorem (\cite{Koe} and \cite{Vin}) says that the finite dimensional JB-algebras are in one-to-one correspondence with symmetric cones, i.e., self-dual cones in a Euclidean space $V$ for which $\mathrm{Aut}(C):=\{T\in \mathrm{GL}(V)\colon T(C)=C\}$ acts transitively on $C^\circ$. As JB-algebras are merely  Banach spaces instead of Hilbert spaces, no  such characterization exists in infinite dimensions. It is, however,  interesting to ask if one could characterize the JB-algebras among the complete order unit spaces in order theoretic terms. One such characterization was obtained by Kai \cite{Kai} who characterized the symmetric cones among the homogeneous cones. More recently Walsh \cite{Wa} gave an order theoretic   characterization of finite dimensional JB-algebras using order-antimorphisms.  A map $g\colon C^\circ\to C^\circ$ is  an {\em order-antimorphism} if for each $x,y\in C^\circ$ we have that $x\leq_C y$ if and only if  $g(y)\leq_C g(x)$.  It is said to be  {\em antihomogeneous} if $g(\lambda x)= \frac{1}{\lambda}g(x)$ for all $\lambda >0$ and $x\in C^\circ$.  

Walsh \cite[Theorem 1.1]{Wa} showed that if $(V,C,u)$ is a finite dimensional order unit space, then there exists an antihomogeneous order-antimorphism $g\colon C^\circ\to C^\circ$  if and only if  $(V,C,u)$ is a JB-algebra. At present it is unknown if this characterization can be extended to infinite dimensional JB-algebras. In this paper we make the following contribution to this problem. 
\begin{theorem}\label{thm:1.1} 
If $(V,C,u)$ is a complete order unit space with a strictly convex cone and $\dim V\geq 3$, then there exists a bijective antihomogeneous order-antimorphism $g\colon C^\circ\to C^\circ$  if and only if $(V,C,u)$ is a  spin factor.
\end{theorem}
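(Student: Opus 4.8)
The plan is to treat the two implications separately, with essentially all of the difficulty sitting in the forward (``only if'') direction. For the easy direction, suppose $(V,C,u)$ is a spin factor $M=H\oplus\R e$. Its positive cone is the Lorentz cone $C=\{a+\alpha e\colon \alpha\geq\|a\|_2\}$, which one checks directly is strictly convex and satisfies $\dim M\geq 3$ by definition. I would take $g$ to be the Jordan inverse $g(x)=x^{-1}$: a short computation with \eqref{spin} gives $x^{-1}=(\alpha^2-\|a\|_2^2)^{-1}(-a+\alpha e)$ for $x=a+\alpha e\in C^\circ$, so $g$ maps $C^\circ$ bijectively (indeed involutively) onto itself and satisfies $g(\lambda x)=\lambda^{-1}g(x)$. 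That $g$ reverses $\leq_C$ on $C^\circ$ is the standard fact that the Jordan inverse is order-reversing on the interior of the cone of a JB-algebra, so $g$ is the required antihomogeneous order-antimorphism.

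For the converse I would begin with the metric geometry. Writing $M(x/y)=\inf\{\lambda>0\colon x\leq_C\lambda y\}$, a direct manipulation using antihomogeneity and order-reversal yields $M(x/y)=M(g(y)/g(x))$ for all $x,y\in C^\circ$; hence $g$ is an isometry for the Thompson metric and, more precisely, it interchanges the Funk and reverse-Funk metrics $\F$ and $\RF$. Since the dilations $x\mapsto\lambda x$ and every cone automorphism are also Thompson isometries, there is a rich isometry group available. The next step is to exploit strict convexity: for a strictly convex cone the Funk, Hilbert and Thompson geometries are well behaved (straight-line geodesics and good uniqueness), the Thompson metric makes $C^\circ$ a Banach--Finsler manifold, and the horofunction boundary of $\F$ can be identified with the boundary structure of $\dC$. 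The map $g$ then descends to an anti-isometry of these boundaries, encoding a duality between the extreme rays of $C$ and those of the dual cone $C^*$; strict convexity of $C$ forces smoothness of $C^*$ and, through the duality, strict convexity and smoothness of both cones.

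The heart of the argument is to upgrade this symmetry to a symmetric-space structure and then to classify. Here I would use $g$ (composed, if necessary, with a suitable dilation so that it acquires a fixed point, which the nonpositively curved Thompson geometry helps to locate) to produce at a base point an isometric involution acting as $-\mathrm{id}$ on the tangent space; antihomogeneity is precisely what guarantees the involutive character of this symmetry. Propagating it by the transitive part of the isometry group exhibits $(C^\circ,d_T)$ as a symmetric Banach--Finsler manifold. The main obstacle is the final classification, where infinite dimensionality bites since Koecher--Vinberg is unavailable: I must show that strict convexity forces the base of $C$ to be a Hilbert ball, i.e.\ that the relevant ``horizontal'' norm is Hilbertian. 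Concretely this reduces to deriving the parallelogram law for that norm from the symmetric structure, so that the base is an ellipsoid and $C$ is a Lorentz cone over a Hilbert space $H$. Once $C$ is identified as such a cone, matching the order unit $u$ and the order unit norm $\|\cdot\|_u$ against \eqref{spin} identifies $(V,C,u)$ as a spin factor, completing the proof.
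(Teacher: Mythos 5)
Your ``if'' direction is fine and matches the paper's (the Jordan inverse on a JB-algebra, here computed explicitly for the spin factor, is a bijective antihomogeneous order-antimorphism via Lemma \ref{lem:2.1}). The converse, however, has two genuine gaps where you invoke mechanisms that fail or are unavailable. First, smoothness: you assert that strict convexity of $C$ yields, ``through the duality,'' smoothness of both cones. Strict convexity of $C$ alone does \emph{not} imply $C$ is smooth, and in infinite dimensions the dual cone lives in $V^*$ with no given identification back to $V$; the boundary anti-isometry you appeal to is exactly Walsh's horofunction-boundary machinery, whose identification for Hilbert geometry is not known beyond finite dimensions --- avoiding it is the paper's stated point. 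Smoothness of $C$ is instead a theorem (Theorem \ref{thm:2.3}) whose proof genuinely uses the map $g$: one builds the symmetry $S_u$ and derives a contradiction from the Gromov-product bound of Karlsson--Noskov together with a cross-ratio estimate. Second, your construction of the point symmetries by ``composing with a dilation so that $g$ acquires a fixed point, which the nonpositively curved Thompson geometry helps to locate'' does not work: $(C^\circ,d_T)$ is not nonpositively curved (the Thompson geometry of the standard positive cone is isometric to a normed space), there is no compactness to produce fixed points in infinite dimensions, and a dilation of $g$ need not have one. The paper replaces this --- and Rademacher's theorem --- by differentiation: $g$ preserves $2$-dimensional subcones (Lemma \ref{planes}, via unique Hilbert-metric geodesics in strictly convex cones), its restrictions are Fr\'echet differentiable by the classification of Thompson isometries of $(\R^2_+)^\circ$, the resulting map $G_x(z)=-\Delta_x^z g(x)$ is a homogeneous order-isomorphism and hence a \emph{linear} cone automorphism by Noll--Sch\"affer, and $S_x:=G_x^{-1}\circ g$ then satisfies $S_x(x)=x$ and $S_x^2=\mathrm{Id}$ (Theorem \ref{thm:symmetries}). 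You propose no substitute for this derivative construction, and without it you have neither symmetries nor a transitive structure.

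The final classification step is also a gap rather than a reduction: saying the problem ``reduces to deriving the parallelogram law from the symmetric structure'' names the difficulty without solving it, and in fact the paper never proves a parallelogram law. Its route is to build the inner product directly from the boundary: smoothness gives a unique supporting state $\phi_p$ at each $p\in\mathcal{P}$, each $p$ has a complement $p'$ with $p+p'=u$ (Lemma \ref{p'}), and one sets $B(p,v):=\phi_{p'}(v)$. The crux is the symmetry $B(p,q)=B(q,p)$ (Proposition \ref{symB}), proved by showing $S_u(p_s)=\frac{1}{s}p_s'$ along type I geodesics and expressing $\phi_{p'}(q)$ as the asymptotic quantity $\lim_{s\to 0} M(q_s/p_s')M(u/p_s')^{-1}$ (Proposition \ref{horo}) --- the one residue of the horofunction method that survives. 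Bilinear extension then gives $V=H\oplus\R u$ with $(x\mid y)=\frac{1}{2}B(x,y)$ an inner product making the closed subspace $H$ a Hilbert space, the order unit norm is computed to be $\|x+\beta u\|_u=\|x\|_2+|\beta|$, and $C=\{a^2\colon a\in V\}$ is verified by direct calculation with the spin product (\ref{spin}). To repair your proposal you would need to supply concrete arguments for precisely these three points: smoothness of $C$, the existence of the symmetries $S_x$, and the symmetry of the boundary pairing $B$; that is where essentially all of the paper's work lies.
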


As our general approach is similar to Walsh's \cite{Wa}, we  briefly discuss the main similarities and differences. To prove that the cone is homogeneous \cite[Lemma 3.5]{Wa} Walsh uses the fact that a bijective antihomogeneous order-antimorphism is a locally Lipschitz map, and hence almost everywhere Fr\'echet differentiable by Rademacher's Theorem.  There is, however, no infinite dimensional version of Rademacher's Theorem. To overcome this difficulty, we show that a bijective antihomogeneous order-antimorphism is Gateaux differentiable at each point in  a strictly convex cone, and work with the Gateaux derivative, see Proposition \ref{prop:der}. Like Walsh we will also use ideas from metric geometry such as Hilbert's and Thompson's metrics. In particular, Walsh applies his characterization of the Hilbert's metric horofunctions \cite{Wa0}, which, at present, is not known for infinite dimensional spaces. Instead we shall show that if there exists a bijective antihomogeneous order-antimorphism on a strictly convex cone, then the cone is smooth, see Theorem \ref{thm:2.3}. This will allow us to avoid the  use of horofunctions completely, but implicitly some of Walsh's horofunction method is still present in the proof of Proposition \ref{symB}. 
 
 \section{Order-antimorphisms and symmetries} 
 
For $x,y\in V$ linearly independent we write $V(x,y):=\spn(x,y)$, $C(x,y):= V(x,y)\cap C$, and $C^\circ(x,y):=V(x,y)\cap C^\circ$.  Note that as $C$ is Archimedean, $C(x,y)$ is a closed 2-dimensional cone in $V(x,y)$, if $x\in C^\circ$.  
 
 Useful tools in the analysis are Hilbert's and Thompson's metrics on $C^\circ$. They are defined in terms of the following function. 
 For $x\in C$ and $y\in C^\circ$ let 
 \[
 M(x/y):=\inf\{\beta>0\colon x\leq_C \beta y\}.
 \]
 Note that $0\leq M(x/y)<\infty$ for all $x\in C$ and $y\in C^\circ$, if $(V,C,u)$ is an order unit space. Moreover, $M(\sigma x/\mu y) =\frac{\sigma}{\mu}M(x/y)$ for all $\sigma,\mu >0$ and $x\in C$ and $y\in C^\circ$.

 Now {\em Hilbert's metric} on $C^\circ$ is defined by 
 \[
 d_H(x,y):=\log M(x/y)+\log M(y/x),
 \]
 and {\em Thompson's metric} on $C^\circ$ is given by 
 \[
 d_T(x,y):=\max\left\{\log M(x/y),\log M(y/x)\right\}
 \]
 for $x,y\in C^\circ$. Note that $d_H(\sigma x,\mu y) = d_H(x,y)$ for all $x,y\in C^\circ$ and $\sigma,\mu>0$. So, $d_H$ is not a metric on $C^\circ$. However, for cones in an order unit space it is known \cite[Chapter 2]{LNBook} that $d_H$ is a metric between pairs of rays in $C^\circ$, as $d_H(x,y)=0$ if and only if $x=\lambda y$ for some $\lambda>0$ in that case. Thompson's metric is a metric on $C^\circ$ in an order unit space. Moreover, its topology coincides with the order unit norm topology on $C^\circ$, see \cite[Chapter 2]{LNBook}.  
 
 The following basic lemma is well known, see e.g., \cite{NS2}, and implies that each antihomogeneous order-antimorphism is an isometry under $d_H$ and $d_T$. For the reader's convenience we include the simple proof. 
 \begin{lemma}\label{lem:2.1} 
 Let $(V,C,u)$ be an order unit space. Then $g\colon C^\circ\to C^\circ$ is an antihomogeneous order-antimorphism if and only if $M(x/y)=M(g(y)/g(x))$ for all 
 $x,y \in C^\circ$. In particular, a bijective antihomogeneous order-antimorphism $g\colon C^\circ\to C^\circ$ is an isometry under $d_H$ and $d_T$, and the inverse $g^{-1}\colon C^\circ\to C^\circ$ is an antihomogeneous order-antimorphism. 
 \end{lemma}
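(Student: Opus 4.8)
The plan is to reduce everything to two elementary properties of $M$ that follow from $C$ being closed, and then to verify both directions by direct substitution. First I would record that the infimum defining $M(x/y)$ is attained, i.e. $x\leq_C M(x/y)\,y$: picking $\beta_n\downarrow M(x/y)$ with $\beta_n y-x\in C$ and using closedness of $C$ gives the limit $M(x/y)y-x\in C$; the same closedness shows $M(x/y)>0$ for $x,y\in C^\circ$, since $M(x/y)=0$ would yield $-x\in C$, contradicting $x\in C^\circ$. Second, I would note the comparison criterion $x\leq_C y\iff M(x/y)\leq 1$ for $x,y\in C^\circ$: the forward implication is immediate from the definition, and the converse follows because $M(x/y)\leq 1$ gives $x\leq_C M(x/y)y\leq_C y$. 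Together with the scaling identity $M(\sigma x/\mu y)=\tfrac{\sigma}{\mu}M(x/y)$ already noted in the text, these are the only ingredients.

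For the direction ``$g$ antihomogeneous order-antimorphism $\Rightarrow$ $M(x/y)=M(g(y)/g(x))$'', I would fix $x,y\in C^\circ$ and set $\beta=M(x/y)$. From the attained inequality $x\leq_C\beta y$ the antimorphism property gives $g(\beta y)\leq_C g(x)$, and antihomogeneity rewrites $g(\beta y)=\tfrac1\beta g(y)$, so $g(y)\leq_C\beta g(x)$ and hence $M(g(y)/g(x))\leq\beta$. For the reverse inequality I would put $\gamma=M(g(y)/g(x))$, use $g(y)\leq_C\gamma g(x)=g(\tfrac1\gamma x)$, and apply the antimorphism property to obtain $\tfrac1\gamma x\leq_C y$, i.e. $M(x/y)\leq\gamma$. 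The two inequalities give equality.

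For the converse I would assume $M(x/y)=M(g(y)/g(x))$ throughout. The order-antimorphism property is then immediate from the comparison criterion: $x\leq_C y\iff M(x/y)\leq 1\iff M(g(y)/g(x))\leq 1\iff g(y)\leq_C g(x)$. Recovering antihomogeneity is the one point that needs a small argument, and I expect it to be the (still routine) crux. Specializing the hypothesis to the pair $(x,\lambda x)$ gives $M(g(\lambda x)/g(x))=M(x/\lambda x)=\tfrac1\lambda$, so by scaling $M\big(g(\lambda x)/\tfrac1\lambda g(x)\big)=1$ and thus $g(\lambda x)\leq_C\tfrac1\lambda g(x)$; specializing instead to $(\lambda x,x)$ yields the reverse comparison $\tfrac1\lambda g(x)\leq_C g(\lambda x)$, and the two force $g(\lambda x)=\tfrac1\lambda g(x)$.

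Finally, for the ``in particular'' statements, applying $M(x/y)=M(g(y)/g(x))$ together with its version obtained by interchanging $x,y$ immediately gives $d_T(g(x),g(y))=d_T(x,y)$ and $d_H(g(x),g(y))=d_H(x,y)$, so $g$ is an isometry for both metrics. For the inverse, substituting $x=g^{-1}(a)$ and $y=g^{-1}(b)$ into the identity yields $M(g^{-1}(b)/g^{-1}(a))=M(a/b)$ for all $a,b\in C^\circ$, which is exactly the characterizing identity of the first part applied to $g^{-1}$; hence $g^{-1}$ is again an antihomogeneous order-antimorphism.
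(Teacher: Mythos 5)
Your proof is correct and follows essentially the same route as the paper's: both directions are verified by direct substitution using the antimorphism/antihomogeneity properties, with antihomogeneity in the converse recovered from the pairs $(x,\lambda x)$ and $(\lambda x,x)$ exactly as in the paper. The only difference is that you make explicit two facts the paper uses tacitly --- that the infimum defining $M(x/y)$ is attained (via closedness/Archimedeanity of $C$) and the criterion $x\leq_C y\iff M(x/y)\leq 1$ --- and that you spell out the ``in particular'' assertions, which is a welcome but minor elaboration.
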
  
 \begin{proof}
Clearly, if $g\colon C^\circ\to C^\circ$ is  antihomogeneous order-antimorphism and $x\leq_C\beta y$, then $g(\beta y)\leq_C g(x)$, so that $g(y)\leq_C \beta g(x)$. This implies that $M(g(y)/g(x))\leq M(x/y)$. On the other hand, $g(y)\leq_C \beta g(x)$ implies  $g(\beta y)\leq_C g(x)$, so that $x\leq_C \beta y$ from which we conclude that $M(x/y)\leq M(g(y)/g(x))$. This shows that $M(x/y)=M(g(y)/g(x))$ for all $x,y\in C^\circ$. 

Now suppose that $M(x/y)=M(g(y)/g(x))$ for all $x,y\in C^\circ$. If $x\leq_C y$, then $M(g(y)/g(x))=M(x/y)\leq 1$, so that $g(y)\leq_C g(x)$. Likewise $g(y)\leq_C g(x)$ implies $M(x/y)=M(g(y)/g(x))\leq 1$, so that $x\leq_C y$, which shows that $g$ is an order-antimorphism. To see that $g$ is antihomogeneous note that if $x\in C^\circ$ and $\lambda>0$, then $y:=\lambda x$ satisfies $M(g(y)/g(x))=M(x/y) = 1/\lambda$ and $M(g(x)/g(y)) =M(y/x) =\lambda$. This implies that $\lambda g(y)\leq_C g(x)\leq_C \lambda g(y)$ from which we conclude that $g(\lambda x) = g(y) = \frac{1}{\lambda}g(x)$. 
 \end{proof}
 
 Every JB-algebra $A$ has a bijective  antihomogeneous order-antimorphism namely, the map $\iota\colon A^\circ_+\to A^\circ_+$ given by $\iota(a) =a^{-1}$. As shown in \cite[Section 2.4]{LRW2}, we have that $M(\iota(a)/\iota(b))=M(b/a)$ for all $a,b\in A^\circ_+$, and hence $\iota$ is a bijective antihomogeneous order-antimorphism by Lemma \ref{lem:2.1}.

 A linear functional $\phi\colon V\to \R$ is said to be {\em positive} if $\phi(C)\subseteq [0,\infty)$, and it is called {\em strictly positive} if 
 $\phi(C\setminus\{0\})\subseteq (0,\infty)$. A positive functional $\phi$ is called a {\em state} of 
 $(V,C,u)$ if $\phi(u) =1$. The set $S(V):=\{\phi\in V^*\colon \phi \mbox{ is a state}\}$ is called the {\em state space}, which is a w*-closed  convex subset of the unit ball in $V^*$,  and hence $S(V)$  is w*-compact by the Banach-Alaoglu Theorem. Moreover, as $x\leq_C\beta y$ is equivalent to $\phi(x)\leq \beta \phi(y)$ for all $\phi\in S(V)$, we get that 
 \begin{equation}\label{eq:var} 
 M(x/y)=\max_{\phi\in S(V)} \frac{\phi(x)}{\phi(y)}\mbox{\quad for all }x,y\in C^\circ.
 \end{equation}

If $(V,C,u)$ is an order unit space with a strictly convex cone, then there exists a strictly positive state on $V$ as the following lemma shows. 
\begin{lemma}\label{states} 
If $(V,C,u)$ is an order unit space with a strictly convex cone, then there exists a strictly positive state $\rho\in S(V)$. 
\end{lemma}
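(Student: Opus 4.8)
The plan is to show that the only way a state can fail to be strictly positive is by vanishing along a single ray of $\partial C$, and then to build $\rho$ by averaging two states whose vanishing rays are forced to differ. First I would record the elementary fact that \emph{every} state is automatically strictly positive on the interior: if $x\in C^\circ$, then $x-\epsilon u\in C$ for some $\epsilon>0$, so $\phi(x)\geq\epsilon\phi(u)=\epsilon>0$ for every $\phi\in S(V)$. In particular, if a state $\phi$ vanishes at some $x\in C\setminus\{0\}$, then necessarily $x\in\partial C$.

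The crucial step, and the only place where strict convexity enters, is the claim that a single state $\phi$ can vanish on at most one ray of $\partial C$. Indeed, suppose $\phi(x)=\phi(y)=0$ with $x,y\in C\setminus\{0\}$ linearly independent; by the previous observation $x,y\in\partial C$, so strict convexity yields $\tfrac12(x+y)\in C^\circ$, whence $\phi(\tfrac12(x+y))>0$, contradicting $\phi(x)=\phi(y)=0$. Thus the zero set $\{x\in C:\phi(x)=0\}$ is contained in a single ray $\{\lambda x_0:\lambda\geq 0\}$.

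With this in hand the construction is quick. I would pick any $\phi_1\in S(V)$ (the state space is nonempty and w*-compact by the Banach--Alaoglu Theorem). If $\phi_1$ is already strictly positive we are done; otherwise it vanishes at some $x_0\in\partial C\setminus\{0\}$, and by the key step its zero set is exactly the ray through $x_0$. Since $x_0\neq 0$ we have $M(x_0/u)>0$ (otherwise $x_0\leq_C 0$, forcing $x_0=0$), and exactly as in \eqref{eq:var} the value $M(x_0/u)=\sup_{\phi\in S(V)}\phi(x_0)$ is attained, because $\phi\mapsto\phi(x_0)$ is w*-continuous on the w*-compact set $S(V)$; this produces a state $\psi$ with $\psi(x_0)>0$. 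Setting $\rho:=\tfrac12(\phi_1+\psi)$ gives a state, and I would check $\rho(x)>0$ for every $x\in C\setminus\{0\}$: the case $x\in C^\circ$ is the first paragraph, while if $x\in\partial C\setminus\{0\}$ and $\rho(x)=0$ then $\phi_1(x)=\psi(x)=0$, so $\phi_1(x)=0$ forces $x=\lambda x_0$ with $\lambda>0$, whence $\psi(x)=\lambda\psi(x_0)>0$, a contradiction. Hence $\rho$ is strictly positive. The substantive content lies entirely in the one-ray step, so the main (really the only) obstacle is confirming that strict convexity collapses the zero set of a state to a single ray; once that is secured, the attainment of $M(x_0/u)$ and the averaging argument are routine.
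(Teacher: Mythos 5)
Your proof is correct, and it shares the paper's overall strategy---average two states whose zero sets, thanks to strict convexity, are confined to distinct rays of $\partial C$---but the two states are produced quite differently. The paper works inside the $2$-dimensional subcone $C(r,u)$: it invokes the structure of closed $2$-dimensional cones \cite[A.5.1]{LNBook} to write $C(r,u)$ as the cone generated by $r$ and a second boundary ray $s$, defines functionals on $V(r,u)$ vanishing at $r$ and at $s$ respectively, extends them by Hahn--Banach with $\|\phi\|=\phi(u)=1$, and uses \cite[1.16 Lemma]{AS1} to see the extensions are states; strict convexity then pins each zero set to the prescribed ray. You instead isolate strict convexity as a clean standalone lemma (the zero set of any state on $C$ lies in a single ray, because two independent zeros in $\partial C$ would force a zero in $C^\circ$, where every state is bounded below by $\epsilon\phi(u)$), start from an \emph{arbitrary} state $\phi_1$, and manufacture the second state $\psi$ with $\psi(x_0)>0$ by w*-compactness: the sup in $M(x_0/u)=\sup_{\phi\in S(V)}\phi(x_0)$ is attained and is positive. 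This buys you independence from the $2$-dimensional structure result and from the explicit Hahn--Banach construction; what it costs is that you lean on the variational formula at a \emph{boundary} point, whereas the paper states (\ref{eq:var}) only for $x,y\in C^\circ$. That extension is harmless---the underlying equivalence ``$x\leq_C\beta y$ iff $\phi(x)\leq\beta\phi(y)$ for all $\phi\in S(V)$'' holds for arbitrary $x\in V$ and $y\in C^\circ$ since the closed cone is determined by the states---but you should say so explicitly rather than cite (\ref{eq:var}) as stated. Two further small points: nonemptiness of $S(V)$ is not given by Banach--Alaoglu (which only yields w*-compactness) but by a Hahn--Banach extension of $\lambda u\mapsto\lambda$ together with \cite[1.16 Lemma]{AS1}, exactly as in the paper's proof; and you do not actually need the attained maximum to equal $M(x_0/u)$---it suffices that not every state kills $x_0$, which again follows from states determining the cone.
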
 
\begin{proof}
Let $r\in\partial C\setminus\{0\}$. Then $C(r,u)$ is a 2-dimensional closed cone in $V$. By \cite[ A.5.1]{LNBook} there exists  an $s\in\partial C\setminus\{0\}$ such that $C(r,u)=\{\alpha r+\beta s\colon \alpha,\beta\geq 0\}$. Let $\phi$ and $\psi$ be linear functionals on $V(r,u)$ such that $\phi(r)=0=\psi(s)$, $\phi(s),\psi(r)>0$, and $\phi(u)=1=\psi(u)$. By the 
Hahn-Banach theorem we can extend $\phi$ and $\psi$ to linear functional on $V$ such that $\|\phi\|=\phi(u)=1$ and $\|\psi\|=\psi(u)=1$. It  follows from \cite[1.16 Lemma]{AS1} that $\phi,\psi\in S(V)$. 

Now let $\rho:=\frac{1}{2}(\phi+\psi)\in S(V)$. Note that $\phi(x) =0$ for $x\in C$ if and only if $x=\lambda r$ for some $\lambda\geq 0$, as $C$ is strictly convex.
Likewise, $\psi(x) =0$ for $x\in C$ if and only if $x=\lambda s$ for some $\lambda\geq 0$. This implies that $\rho(x)>0$ for all $x\in C\setminus\{0\}$. 
\end{proof}

Next we shall show that antihomogeneous order-antimorphisms on strictly convex cones map 2-dimensional subcones to 2-dimensional subcones. To prove this we use unique geodesics. Recall that given a metric space $(X,d_X)$ a {\em geodesic path} $\gamma\colon I\to X$, where $I\subseteq \R$ is a possibly unbounded  interval, is a map such that 
\[
d_X(\gamma(s),\gamma(t)) =|s-t|\mbox{\quad for all }s,t\in I.
\] 
The image $\gamma(I)$ is simply called a {\em geodesic}, and $\gamma(\R)$ is said to be a {\em geodesic line} in $(X,d_X)$. A geodesic line $\gamma$ is called {\em unique}  if for each $x$ and $y$ on $\gamma$ we have that $\gamma$ is the only geodesic line  through $x$ and $y$ in $(X,d_X)$. 

If $(V,C,u)$ is an order unit space with a strictly positive functional $\rho\in S(V)$, then $d_H$ is a metric on 
\[
\Sigma_\rho:=\{x\in C^\circ\colon \rho(x) =1\}.
\] 
Straight line segments are geodesic in the Hilbert's metic space $(\Sigma_\rho,d_H)$. Moreover, if the cone is strictly convex, then  it is well known, see for example \cite[Section 18]{Bus}, that each geodesic in the Hilbert's metic space $(\Sigma_\rho,d_H)$ is a straight line segment. 

\begin{lemma} \label{planes}
Let $(V,C,u)$ be an order unit space with a strictly convex cone, and $g\colon C^\circ\to C^\circ$ be a bijective  antihomogeneous order-antimorphism. If $x,y\in C^\circ$ are linearly independent, then $g(x)$ and $g(y)$ are linearly independent and $g$ maps $C^\circ(x,y)$ onto $C^\circ(g(x),g(y))$. 
\end{lemma}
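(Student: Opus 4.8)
The plan is to descend $g$ to a genuine metric isometry of a Hilbert-metric cross-section, where strict convexity forces the geodesics to be straight line segments, and then read the statement off from the fact that isometries carry geodesics to geodesics. I would first dispose of the linear independence claim. If $g(x)$ and $g(y)$ were linearly dependent, then, both lying in $C^\circ$, they would lie on a common ray, giving $d_H(g(x),g(y))=0$; but $g$ is a $d_H$-isometry by Lemma \ref{lem:2.1}, so $d_H(x,y)=0$, forcing $x$ and $y$ onto a common ray and contradicting their linear independence. Hence $g(x)$ and $g(y)$ are linearly independent.

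Next I would fix a strictly positive state $\rho\in S(V)$, which exists by Lemma \ref{states}, and pass to the cross-section $\Sigma_\rho=\{z\in C^\circ\colon\rho(z)=1\}$, on which $d_H$ is a genuine metric. Define $\bar g\colon\Sigma_\rho\to\Sigma_\rho$ by $\bar g(z)=g(z)/\rho(g(z))$, which is well defined since $\rho(g(z))>0$. Using the ray-invariance $d_H(\sigma a,\mu b)=d_H(a,b)$ together with the isometry property of $g$, one checks $d_H(\bar g(z),\bar g(w))=d_H(g(z),g(w))=d_H(z,w)$, so $\bar g$ is an isometry. Antihomogeneity of $g$ (Lemma \ref{lem:2.1}) shows that $g$ carries rays onto rays, and combined with the bijectivity of $g$ this yields that $\bar g$ is a bijection of $\Sigma_\rho$, its inverse being the analogous map built from $g^{-1}$.

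Now $C^\circ(x,y)\cap\Sigma_\rho$ is the intersection of the open cone $C^\circ$ with the affine line $V(x,y)\cap\{\rho=1\}$, hence an open straight line segment, which is a geodesic line in $(\Sigma_\rho,d_H)$ since straight segments are geodesics. As $\bar g$ is a bijective isometry it maps this geodesic line onto a geodesic line, and because the cone is strictly convex every geodesic line in $(\Sigma_\rho,d_H)$ is a straight line segment. Thus $\bar g(C^\circ(x,y)\cap\Sigma_\rho)$ is a straight segment passing through the two distinct points $\overline{g(x)}$ and $\overline{g(y)}$, the images of the normalizations of $x$ and $y$. Such a segment lies in the plane spanned by these two points, namely $V(g(x),g(y))$, and by uniqueness of geodesics it coincides with the maximal straight segment $C^\circ(g(x),g(y))\cap\Sigma_\rho$ determined by them.

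Finally I would lift back along rays: every point of $C^\circ(x,y)$ has the form $\lambda z$ with $z\in C^\circ(x,y)\cap\Sigma_\rho$ and $\lambda>0$, and $g(\lambda z)=\lambda^{-1}g(z)$ is a positive multiple of a point whose normalization $\bar g(z)$ lies in $C^\circ(g(x),g(y))\cap\Sigma_\rho$; since $C^\circ(g(x),g(y))$ is a cone this gives $g(C^\circ(x,y))\subseteq C^\circ(g(x),g(y))$, and the surjectivity of $\bar g$ onto the cross-section segment upgrades the inclusion to an onto map. The one step requiring genuine care, and where strict convexity is indispensable, is the passage from the abstract statement that $\bar g$ is a metric isometry to the rigid conclusion that it sends straight segments to straight segments; everything else is bookkeeping with rays and normalizations.
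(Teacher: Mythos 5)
Your proof is correct and takes essentially the same route as the paper's: fix a strictly positive state via Lemma \ref{states}, pass to the cross-section $\Sigma_\rho$, observe that the normalized map is a bijective $d_H$-isometry by Lemma \ref{lem:2.1}, and use strict convexity to force the image geodesic line to be the straight chord through the normalizations of $g(x)$ and $g(y)$, then lift along rays by antihomogeneity. Your upfront argument for linear independence of $g(x)$ and $g(y)$ (via $d_H(g(x),g(y))=d_H(x,y)>0$) is a minor streamlining; the paper instead reads independence off from the image segment, but the proofs are otherwise identical.
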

\begin{proof}
Let $\rho\in S(V)$ be a strictly positive state, which we know exists by Lemma \ref{states}. Now define $f\colon \Sigma_\rho\to\Sigma_\rho$ by 
\[
f(x) :=\frac{g(x)}{\rho(g(x))}\mbox{\quad for all }x\in\Sigma_\rho.
\]
Then $f$ is an isometry on  $(\Sigma_\rho,d_H)$ by Lemma \ref{lem:2.1}.   If $x,y\in C^\circ$ are linearly independent, then the straight line $\ell$ through $x/\rho(x)$ and $y/\rho(y)$ intersected with $\Sigma_\rho$ is a geodesic line in $(\Sigma_\rho, d_H)$. Thus, $f(\ell\cap \Sigma_\rho)$ is also a geodesic line, and hence a straight line segment, as $C$ is strictly convex. 
In fact, its image is the intersection of the straight line  through $g(x)/\rho(g(x))$ and $g(y)/\rho(g(y))$ and $\Sigma_\rho$. It follows that  $g(x)/\rho(g(x))$ and $g(y)/\rho(g(y))$ are linearly independent and that $g$ maps $C^\circ(x,y)$ onto $C^\circ(g(x),g(y))$, as $g$ is antihomogeneous. 
\end{proof}
We note that the proof of Lemma \ref{planes} goes through if one only assumes that  $(\Sigma_\rho,d_H)$ is uniquely geodesic. 

Using this lemma we can now prove the following proposition. 
\begin{proposition}\label{prop:der}
Let $(V,C,u)$ be an order unit space with a strictly convex cone. If $g\colon C^\circ\to C^\circ$ is a bijective antihomogeneous order-antimorphism, then the following assertions hold. 
\begin{enumerate}[(1)] 
\item For each linearly independent $x,y\in C^\circ$  the restriction $g_{xy}$ of $g$ to $C^\circ(x,y)$ is a Fr\'echet differentiable map, and its Fr\'echet derivative $Dg_{xy}(z)$ at $z\in C^\circ(x,y)$ is an invertible linear map from $V(x,y)$ onto $V(g(x),g(y))$. 
\item For each $x\in C^\circ$ and $z\in V$ we have that 
\[
\Delta_x^z g(x):=\lim_{t\to 0} \frac{g(x+t z)-g(x)}{t}
\]
exists, and $-\Delta_x^zg(x)\in C$ for all $z\in C$. 
\item For each $x\in C^\circ$ we have $\Delta_x^{\lambda x}g(x) = -\lambda g(x)$ for all $\lambda\in \R$.
\end{enumerate}
\end{proposition}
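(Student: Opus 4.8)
The plan is to establish (1) first, since (2) and (3) follow from it combined with elementary properties of $g$, and since (1) is where essentially all the work lies.

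For (1), fix linearly independent $x,y\in C^\circ$. By Lemma \ref{planes} the map $g$ carries $C^\circ(x,y)$ bijectively onto $C^\circ(g(x),g(y))$, and both are interiors of closed salient $2$-dimensional cones (closed and $2$-dimensional because $x,g(x)\in C^\circ$). Each such cone has exactly two extreme rays by \cite[A.5.1]{LNBook}, so choosing generators of these rays as a basis gives a linear isomorphism of $V(x,y)$ onto $\R^2$ carrying $C(x,y)$ onto the standard cone $\R^2_+$, and similarly for $V(g(x),g(y))$. Post-composing with the coordinatewise logarithm turns $C^\circ(x,y)$ and $C^\circ(g(x),g(y))$ into $\R^2$ via diffeomorphisms $\Phi$ and $\Psi$. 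The key observation is that in these coordinates Thompson's metric becomes the sup-norm: for the standard cone $M((a,b)/(c,d))=\max(a/c,b/d)$, so $d_T$ is $\max(|\log a-\log c|,|\log b-\log d|)=\|\cdot\|_\infty$. Here I use that the ambient Thompson metric restricted to $C^\circ(x,y)$ coincides with the intrinsic Thompson metric of $C(x,y)$, which holds because $C\cap V(x,y)=C(x,y)$.

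By Lemma \ref{lem:2.1} the map $g$, hence its restriction $g_{xy}$, is a Thompson isometry, so $G:=\Psi\circ g_{xy}\circ\Phi^{-1}$ is a surjective isometry of $(\R^2,\|\cdot\|_\infty)$ onto itself. By the Mazur--Ulam theorem $G$ is affine, and its linear part, being a surjective linear isometry of a finite dimensional space, is invertible. Consequently $G$ is smooth with invertible derivative everywhere, and transferring back through the diffeomorphisms $\Phi$ and $\Psi$ shows that $g_{xy}=\Psi^{-1}\circ G\circ\Phi$ is Fr\'echet differentiable on $C^\circ(x,y)$, with $Dg_{xy}(z)$ a composition of linear isomorphisms and hence an invertible linear map from $V(x,y)$ onto $V(g(x),g(y))$. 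This proves (1). I expect this to be the crux of the argument; the point that makes it work is precisely that Thompson's metric on a $2$-dimensional cone is isometric to the sup-norm on $\R^2$, which lets me replace any appeal to a Rademacher-type theorem by the rigidity furnished by Mazur--Ulam.

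For (3), fix $x\in C^\circ$ and $\lambda\in\R$. For $t$ small enough that $1+t\lambda>0$, antihomogeneity gives $g(x+t\lambda x)=g((1+t\lambda)x)=(1+t\lambda)^{-1}g(x)$, so the difference quotient equals $\tfrac{-\lambda}{1+t\lambda}g(x)$, which tends to $-\lambda g(x)$ as $t\to0$. For (2), if $z$ is a scalar multiple of $x$ the limit exists by (3); otherwise $x$ and $z$ are linearly independent, $x+tz\in C^\circ(x,z)$ for small $t$, and $\Delta_x^zg(x)=Dg_{xz}(x)z$ exists by the Fr\'echet differentiability from (1). Finally, for $z\in C$ we note that $x\leq_C x+tz$ for $t>0$, hence $g(x+tz)\leq_C g(x)$ since $g$ is an order-antimorphism; thus $\tfrac{1}{t}(g(x+tz)-g(x))\in -C$, and letting $t\to0^+$ and using that $C$ is closed yields $\Delta^z_xg(x)\in -C$, i.e. $-\Delta^z_xg(x)\in C$.
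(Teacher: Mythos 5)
Your proof is correct, and it follows the paper's skeleton — Lemma \ref{planes} plus the linear reduction of the two-dimensional cones $C(x,y)$ and $C(g(x),g(y))$ to $\R^2_+$ via \cite[A.5.1]{LNBook}, with parts (2) and (3) handled exactly as in the paper — but it diverges at the crux of (1). There the paper disposes of the induced map $h$ on $(\R^2_+)^\circ$ by citing the classification \cite[Theorem 3.2]{LRW2}, which yields the explicit form $h((z_1,z_2))=(a_1/z_{\sigma(1)},a_2/z_{\sigma(2)})$; you instead pass to logarithmic coordinates, observe that the Thompson metric of $\R^2_+$ becomes the sup-norm on $\R^2$ (your remark that the ambient and intrinsic Thompson metrics agree on $C^\circ(x,y)$ because $C\cap V(x,y)=C(x,y)$ is exactly the point that must be checked), and invoke Mazur--Ulam to conclude that the conjugated map is affine, hence smooth with invertible derivative. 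This is a genuinely different mechanism and arguably more self-contained: it uses only that $g_{xy}$ is a surjective $d_T$-isometry (antihomogeneity and the antimorphism property enter solely through Lemma \ref{lem:2.1}), and it trades the explicit formula — which the paper never exploits beyond differentiability — for the classical rigidity of surjective isometries of normed spaces, avoiding the external citation entirely. One cosmetic repair: in (2), when $z\notin C^\circ$ the symbol $Dg_{xz}$ is not literally covered by (1); take $y:=x+\epsilon z\in C^\circ$ for small $\epsilon>0$, so that $V(x,z)=V(x,y)$ and $\Delta_x^z g(x)=Dg_{xy}(x)(z)$, which is precisely how the paper phrases this step. Your closing argument for $-\Delta_x^z g(x)\in C$ (one-sided difference quotients in $-C$ together with closedness of $C$) is, if anything, spelled out more carefully than in the paper.
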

\begin{proof}
Let  $x,y\in C^\circ$ be linearly independent and $g\colon C^\circ\to C^\circ$ be an antihomogeneous order-antimorphism. By Lemma \ref{planes} the restriction $g_{xy}$ of $g$ maps  $C^\circ(x,y)$ onto $C^\circ(g(x),g(y))$. The 2-dimensional closed cones $C(x,y)$ and $C(g(x),g(y))$ are order-isomorphic to $\R^2_+:=\{(x_1,x_2)\in\R^2\colon x_1,x_2\geq 0\}$, i.e., there exist linear maps $A\colon V(x,y)\to \R^2$ and $B\colon V(g(x),g(y))\to\R^2$ such that $A(C(x,y))=\R^2_+$ and $B(C(g(x),g(y)))=\R^2_+$. Thus, the map $h\colon (\R_+^2)^\circ\to(\R_+^2)^\circ$ given by $h(z) = B(g_{xy}(A^{-1}(z)))$ is a bijective antihomogeneous order-antimorphism on  $(\R^2_+)^\circ$, and hence $h$ is a $d_T$-isometry on $(\R_+^2)^\circ$. We know from \cite[Theorem 3.2]{LRW2} that $h$ is of the form: 
\[
h((z_1,z_2)) = (a_1/z_{\sigma(1)},a_2/z_{\sigma(2)})\mbox{\quad for }(z_1,z_2)\in (\R^2_+)^\circ,
\]
where $\sigma$ is a permutation on $\{1,2\}$ and $a_1,a_2>0$ are fixed. Clearly the map $h$ is Fr\'echet differentiable on $(\R^2_+)^\circ$, and hence $g_{xy}$ is Fr\'echet differentiable on $C^\circ(x,y)$. Moreover, the Fr\'echet derivative $Dh(z)$ is an invertible linear map on $\R^2$ at each $z\in (\R^2_+)^\circ$, so that  
$Dg_{xy}(z)$  an invertible linear map from $V(x,y)$ onto $V(g(x),g(y))$ for all $z\in C^\circ(x,y)$. 

To prove the second statement note that if $z$ is linearly independent of $x$, then there exists a $y\in C^\circ$ such that $z\in V(x,y)$. From (1) we get that $\Delta_x^z g(x) = Dg_{xy}(x)(z)$, as $g_{xy}$ is Fr\'echet differentiable on $C^\circ(x,y)$. Also, if $z=\lambda x$ for some $\lambda\neq 0$, then 
\[
\Delta_x^{\lambda x} g(x) = \lim_{t\to 0}\frac{g(x+t\lambda x)-g(x)}{t} = \lim_{t\to 0}\frac{-\lambda t}{t(1+\lambda t)} g(x) =-\lambda g(x), 
\] 
and $\Delta_x^0 g(x) =0$. Furthermore, if $z\in C$, then 
\[
\Delta_x^{z} g(x) = \lim_{t\to 0}\frac{g(x+tz)-g(x)}{t} \in -C,
\]
as $g$ is an order-antimorphism. This completes the proofs of (2) and (3). 
\end{proof}

Given a bijective antihomogeneous order-antimorphism $g\colon C^\circ\to C^\circ$ on a strictly convex cone $C$ in an order unit space, and $x\in C^\circ$ we define $G_x=G_{g,x}\colon V\to V$ by 
\[
G_{x}(z) :=-\Delta_x^z g(x)\mbox{\quad for all }z\in V.
\]
\begin{lemma}\label{lem:2.5}
If $x\in C^\circ$ and $G_x(x)=x$, then $g(x)=x$.
\end{lemma}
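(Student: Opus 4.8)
The plan is to observe that the asserted implication is an immediate consequence of part (3) of Proposition \ref{prop:der}, so that essentially no work remains beyond unwinding the definition of $G_x$. I would not attempt anything delicate here: the lemma is really a bookkeeping identity recording the value of the linearization $G_x$ on the distinguished ray through $x$.

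First I would recall that by definition $G_x(x) = -\Delta_x^x g(x)$, where $\Delta_x^x g(x) = \lim_{t\to 0}\frac{g(x+tx)-g(x)}{t}$ is the directional derivative of $g$ at $x$ in the direction $x$ itself. Then I would apply Proposition \ref{prop:der}(3) with $\lambda = 1$, which yields $\Delta_x^x g(x) = -g(x)$. This is exactly the computation already carried out in the proof of that proposition: by antihomogeneity $g(x+tx) = g((1+t)x) = (1+t)^{-1}g(x)$, so the difference quotient equals $\frac{1}{t}\bigl((1+t)^{-1}-1\bigr)g(x) = \frac{-1}{1+t}g(x)$, which tends to $-g(x)$ as $t\to 0$.

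Combining these two facts gives $G_x(x) = -\Delta_x^x g(x) = g(x)$, so the value $G_x(x)$ always coincides with $g(x)$, regardless of any fixed-point hypothesis. In particular, the assumption $G_x(x) = x$ is literally the statement $g(x) = x$, which is the desired conclusion. There is accordingly no genuine obstacle in the argument; the only input is part (3) of the preceding proposition, which has already done the relevant directional-derivative computation along $\R x$. The point of recording the identity $G_x(x) = g(x)$ separately is presumably that $G_x$, the (negative) Gateaux derivative of $g$ at $x$, will subsequently be analysed as a linear symmetry, and this lemma pins down how that symmetry acts on the distinguished ray through $x$, which is precisely what lets one detect fixed points of $g$ from fixed vectors of $G_x$.
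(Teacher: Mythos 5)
Your proposal is correct and coincides with the paper's own one-line proof: both reduce the lemma to Proposition \ref{prop:der}(3) (with $\lambda=1$), giving $G_x(x)=-\Delta_x^x g(x)=g(x)$, so the hypothesis $G_x(x)=x$ is literally $g(x)=x$. Your extra unwinding of the difference quotient via antihomogeneity merely repeats the computation already done inside the proof of Proposition \ref{prop:der}(3) and changes nothing.
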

\begin{proof}
Simply note that $x=G_x(x) = -\Delta_x^x g(x)=g(x)$ by Proposition \ref{prop:der}(3).
\end{proof}
The map $G_x$ has the following property. 
\begin{proposition}\label{prop:2.6}
The map $G_x\colon V\to V$ is a bijective homogeneous order-isomorphism with inverse $G_{g^{-1},g(x)}\colon V\to V$. 
\end{proposition}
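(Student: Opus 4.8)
The plan is to establish, in turn, that $G_x$ is homogeneous, additive, and positive, and then that $G_{g^{-1},g(x)}$ is a two-sided inverse; positivity of $G_x$ together with positivity of its inverse will then upgrade $G_x$ to an order-isomorphism. Homogeneity is immediate: for $\mu\neq 0$ the substitution $s=\mu t$ in the limit defining $\Delta_x^{\mu z}g(x)$ gives $G_x(\mu z)=\mu G_x(z)$, and $G_x(0)=0$, so $G_x(\mu z)=\mu G_x(z)$ for all $\mu\in\R$. Positivity is Proposition \ref{prop:der}(2): $G_x(z)=-\Delta_x^z g(x)\in C$ whenever $z\in C$, so $G_x(C)\subseteq C$.

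The heart of the argument is additivity, which I would extract from a convexity property of $g$. First I would show that $g$ is \emph{order-convex} along segments: for linearly independent $x_1,x_2\in C^\circ$ and $\lambda\in[0,1]$,
\[
g((1-\lambda)x_1+\lambda x_2)\leq_C (1-\lambda)g(x_1)+\lambda g(x_2).
\]
By Lemma \ref{planes} the three points $g((1-\lambda)x_1+\lambda x_2)$, $g(x_1)$, $g(x_2)$ all lie in $V(g(x_1),g(x_2))$, so the asserted inequality takes place inside the two-dimensional subcone $C(g(x_1),g(x_2))$. Transporting to $\R^2_+$ by the linear order-isomorphisms $A,B$ of the proof of Proposition \ref{prop:der}, the map $g$ becomes $h((z_1,z_2))=(a_1/z_{\sigma(1)},a_2/z_{\sigma(2)})$, whose components are convex; hence $h$, and therefore $g$ restricted to $C^\circ(x_1,x_2)$, is convex, which gives the displayed inequality. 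Consequently $\phi\circ g$ is a convex real-valued function on $C^\circ$ for every state $\phi\in S(V)$.

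Now fix $\phi\in S(V)$ and $z_1,z_2\in V$. The standard sublinearity of the (one-sided) directional derivative of the convex function $\phi\circ g$, combined with the existence of the two-sided limits from Proposition \ref{prop:der}(2), yields $\phi(\Delta_x^{z_1+z_2}g(x))\leq \phi(\Delta_x^{z_1}g(x))+\phi(\Delta_x^{z_2}g(x))$. Since $a\leq_C b$ is equivalent to $\phi(a)\leq\phi(b)$ for all $\phi\in S(V)$, this gives $G_x(z_1)+G_x(z_2)\leq_C G_x(z_1+z_2)$. Applying the same inequality to $-z_1,-z_2$ and using homogeneity with $\mu=-1$ reverses the inequality, and since $C\cap-C=\{0\}$ we conclude $G_x(z_1+z_2)=G_x(z_1)+G_x(z_2)$. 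Thus $G_x$ is linear, and being positive it is bounded.

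Finally I would identify the inverse. By Lemma \ref{lem:2.1} the map $g^{-1}$ is again a bijective antihomogeneous order-antimorphism, so $\tilde G:=G_{g^{-1},g(x)}$ is, by the preceding paragraphs, a positive linear map. To see $\tilde G\circ G_x=\mathrm{id}_V$, I would treat directions dependent on $x$ directly, using $G_x(\lambda x)=\lambda g(x)$ from Proposition \ref{prop:der}(3) and $\tilde G(g(x))=x$ from the analogue of Lemma \ref{lem:2.5} for $g^{-1}$; and for $z$ independent of $x$ I would choose $y\in C^\circ$ with $z\in V(x,y)$ and apply the chain rule to the mutually inverse Fréchet differentiable restrictions $g_{xy}$ and $(g^{-1})_{g(x)g(y)}$ of Proposition \ref{prop:der}(1), whose derivatives at $x$ and $g(x)$ are $-G_x|_{V(x,y)}$ and $-\tilde G|_{V(g(x),g(y))}$ and whose composite is the identity on $V(x,y)$. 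Symmetrically $G_x\circ\tilde G=\mathrm{id}_V$, so $G_x$ is bijective with inverse $\tilde G$. As $\tilde G(C)\subseteq C$ as well, $C=G_x(\tilde G(C))\subseteq G_x(C)\subseteq C$, whence $G_x(C)=C$ and $G_x$ is an order-isomorphism. The main obstacle is precisely the additivity step, namely promoting the planewise linearity of Proposition \ref{prop:der}(1) to genuine linearity of the Gateaux derivative on all of $V$; the convexity of $g$ together with the two-sided existence of the directional derivatives is exactly what bridges that gap.
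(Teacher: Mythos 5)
Your proof is correct, but it takes a genuinely different route through the heart of the proposition. The paper never proves additivity of $G_x$ at this stage: it establishes homogeneity exactly as you do, then proves monotonicity \emph{directly} from the order-antimorphism property --- if $w\leq_C z$ then $x+tw\leq_C x+tz$, so the difference quotients of $g$ are ordered and, since $C$ is closed, the limits satisfy $G_x(w)\leq_C G_x(z)$; the same applies to $G_{g^{-1},g(x)}$, and the inverse is identified by the same chain-rule computation with $h:=g_{xy}\circ g^{-1}_{g(x)g(y)}$ that you use. Linearity of $G_x$ is then deduced \emph{after} the proposition by invoking \cite[Theorem B]{NS2}, which says that a bijective homogeneous order-isomorphism is linear. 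You instead prove additivity from scratch: order-convexity of $g$ on planar sections (legitimate, since by Lemma \ref{planes} everything happens in $C(g(x_1),g(x_2))$ and the explicit two-dimensional form $h(z)=(a_1/z_{\sigma(1)},a_2/z_{\sigma(2)})$ from the proof of Proposition \ref{prop:der} has convex components), then sublinearity of directional derivatives of the convex functions $\phi\circ g$ combined with the two-sided existence of $\Delta_x^z g(x)$ from Proposition \ref{prop:der}(2), and finally antisymmetry of $\leq_C$. This is heavier than the paper's one-line monotonicity argument, but it buys a self-contained proof of linearity of $G_x$ that avoids the appeal to Noll--Sch\"affer at this point --- note, though, that \cite[Theorem B]{NS2} is still needed later (to show $S_x^2$ is linear in Theorem \ref{thm:symmetries}), so the external dependence is not eliminated from the paper as a whole. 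Two small remarks: your citation of ``the analogue of Lemma \ref{lem:2.5} for $g^{-1}$'' for $\tilde G(g(x))=x$ should really be Proposition \ref{prop:der}(3) applied to $g^{-1}$ at the point $g(x)$, which gives $G_{g^{-1},g(x)}(\lambda g(x))=\lambda x$; and your separate treatment of the directions $\lambda x$ is redundant, since $x\in V(x,y)$ so the chain-rule identity $\tilde G\circ G_x=\mathrm{Id}$ on $V(x,y)$ already covers them.
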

\begin{proof}
Let $z\in V(x,y)$, $x,y\in C^\circ$ linearly independent, and $\lambda\neq 0$. Then 
\[
G_x(\lambda z) = -\lim_{t\to 0} \frac{g(x+t\lambda z)-g(x)}{t}=-\lambda \lim_{t\to 0}\frac{g(x+t\lambda z)-g(x)}{\lambda t} = \lambda G_x(z).
\]

Also if $w\leq_C z$, then 
\[
G_x(w) = -\lim_{t\to 0}  \frac{g(x+tw)-g(x)}{t} \leq_C   -\lim_{t\to 0}  \frac{g(x+tz)-g(x)}{t} = G_x(z),
\]
as $x+tw\leq_C x+tz$ for all $t>0$ and $g$ is an order-antimorphism. 

To show that $G_x$ is a surjective map on $V$ let $h:= g_{xy}\circ g^{-1}_{g(x)g(y)}$. So, 
$h\colon C^\circ(g(x),g(y))\to C^\circ(g(x),g(y))$ and $h(z) =z$ for all $z\in C^\circ(g(x),g(y))$. For each $w\in V(g(x),g(y))$ we have by the chain rule that 
\[ 
w = Dh(g_{xy}(x))(w) = Dg_{xy}(x)Dg^{-1}_{g(x)g(y)}(g_{xy}(x))w  = G_x(G_{g^{-1},g(x)}(w)).
\] 

Interchanging the roles of $g$ and $g^{-1}$ we also have that $G_{g^{-1},g(x)}(G_x(v))=v$ for all $v\in V(x,y)$, and hence $G_{g^{-1},g(x)}$ is the inverse of $G_x$ on $V$.  
\end{proof}

Combining Proposition \ref{prop:2.6} and  \cite[Theorem B]{NS2}   we conclude that $G_x\in \mathrm{Aut}(C):=\{T\in \mathrm{GL}(V)\colon T(C)=C\}$ and $G_x$ is continuous with respect to $\|\cdot\|_u$ on $V$, as $\|G_x\|_u=\|G_x(u)\|_u$.  

Now for $x\in C^\circ$  define the {\em symmetry at $x$} by  
\begin{equation}\label{Sx} 
S_x:= G^{-1}_x\circ g.
\end{equation}
So, $S_x\colon C^\circ\to C^\circ$ is a bijective antihomogeneous order-antimorphism, with inverse $S_x^{-1} = g^{-1}\circ G_x$. We derive some further properties of the symmetries. Let us begin by making the following useful observation. 
\begin{lemma}\label{DSx}
Let $x\in C^\circ$ and $y\in V$ be linearly independent of $x$. Then for each $w\in V(x,y)$ we have  that 
$D(S_x)_{xy}(x)(w)=-w$. 
\end{lemma}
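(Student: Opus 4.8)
The plan is to exploit the factorization $S_x=G_x^{-1}\circ g$ from \eqref{Sx} together with the chain rule, reducing everything to the already-established relationship between $Dg_{xy}(x)$ and the linear map $G_x$.

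First I would record the linearity facts. By Proposition \ref{prop:2.6} and the remark following it, $G_x\in\mathrm{Aut}(C)$ is an invertible linear map on $V$ with inverse $G_x^{-1}=G_{g^{-1},g(x)}$; in particular $G_x^{-1}$ is its own Fr\'echet derivative at every point. Since $S_x$ is itself a bijective antihomogeneous order-antimorphism, Proposition \ref{prop:der}(1) guarantees that the restriction $(S_x)_{xy}$ is Fr\'echet differentiable at $x$, so that $D(S_x)_{xy}(x)$ is a well-defined invertible linear map from $V(x,y)$ onto $V(S_x(x),S_x(y))$.

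Next I would apply the chain rule to $(S_x)_{xy}=G_x^{-1}\circ g_{xy}$ at the point $x$. Because $G_x^{-1}$ is linear, this gives, for every $w\in V(x,y)$,
\[
D(S_x)_{xy}(x)(w)=G_x^{-1}\bigl(Dg_{xy}(x)(w)\bigr).
\]
The last ingredient is the identity $Dg_{xy}(x)(w)=\Delta_x^w g(x)=-G_x(w)$, valid for all $w\in V(x,y)$. Indeed, both $w\mapsto Dg_{xy}(x)(w)$ and $w\mapsto -G_x(w)$ are linear on $V(x,y)$, so it suffices to check them on $x$ and on one vector independent of $x$: the latter is exactly Proposition \ref{prop:der}(2), while on $x$ the value is $\Delta_x^x g(x)=-g(x)=-G_x(x)$ by part (3). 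Substituting this into the displayed formula yields $D(S_x)_{xy}(x)(w)=G_x^{-1}(-G_x(w))=-w$, which is the claim.

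Since the argument is a direct composition of results already proved, I do not anticipate a genuine obstacle; the only point demanding care is the book-keeping of domains and codomains of the linear maps involved. Concretely, one must observe that $G_x$ carries $V(x,y)$ onto $V(g(x),g(y))$ — which again follows from $G_x|_{V(x,y)}=-Dg_{xy}(x)$ — so that $G_x^{-1}$ legitimately sends $Dg_{xy}(x)(w)\in V(g(x),g(y))$ back into $V(x,y)$. As a by-product the computation shows that $V(x,S_x(y))=V(x,y)$, consistent with $x$ being a fixed point of the symmetry $S_x$.
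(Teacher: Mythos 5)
Your proof is correct and follows essentially the same route as the paper: both factor $S_x=G_x^{-1}\circ g$ and use that $G_x^{-1}=G_{g^{-1},g(x)}$ is a bounded linear map together with $\Delta_x^w g(x)=-G_x(w)$, the paper by pulling $G_x^{-1}$ through the limit defining the directional derivative and you by the (equivalent) chain-rule formulation. Your basis-checking of the identity $Dg_{xy}(x)(w)=-G_x(w)$ is harmless but unnecessary, since the directional derivative of the Fr\'echet differentiable map $g_{xy}$ equals $Dg_{xy}(x)(w)$ for every $w\in V(x,y)$, and $\Delta_x^w g(x)=-G_x(w)$ holds by the very definition of $G_x$.
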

\begin{proof}
Note that 
\begin{eqnarray*}
D(S_x)_{xy}(x)(w)& = & \lim_{t\to 0} \frac{S_x(x+tw) - S_x(x)}{t} \\
    & = & \lim_{t\to 0} \frac{G^{-1}_x(g(x+tw)) - G^{-1}_x(g(x))}{t} \\
	& = & G^{-1}_x\left(\lim_{t\to 0} \frac{g(x+tw) - g(x)}{t}\right) \\
	& = & G^{-1}_x(-G_x(w))\\
	& = & -w,
\end{eqnarray*}
as $G^{-1}_x= G_{g^{-1},g(x)}$ is a bounded linear map on $(V,\|\cdot\|_u)$ by Proposition \ref{prop:2.6}.
\end{proof}

\begin{theorem}\label{thm:symmetries}
For each $x\in C^\circ$ we have that 
\begin{enumerate}[(1)]
\item $S_x(x)= x$. 
\item $S_x\circ S_x=\mathrm{Id}$ on $C^\circ$. 
\end{enumerate}
\end{theorem}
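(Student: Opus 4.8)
The plan is to treat the two assertions separately, the first being immediate from Proposition \ref{prop:der}(3) and the second resting on the observation that $S_x$ preserves every $2$-dimensional subcone through $x$. For (1), recall from \eqref{Sx} that $S_x = G_x^{-1}\circ g$. Taking $\lambda = 1$ in Proposition \ref{prop:der}(3) gives $\Delta_x^x g(x) = -g(x)$, so that $G_x(x) = -\Delta_x^x g(x) = g(x)$. Since $G_x$ is a bijection by Proposition \ref{prop:2.6}, we get $S_x(x) = G_x^{-1}(g(x)) = G_x^{-1}(G_x(x)) = x$.

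For (2), the first step I would carry out is to show that $S_x$ maps each plane $C^\circ(x,y)$, with $y$ linearly independent of $x$, onto itself. Indeed, applying Proposition \ref{prop:der}(1) to the antihomogeneous order-antimorphism $S_x$, the derivative $D(S_x)_{xy}(x)$ is a linear isomorphism from $V(x,y)$ onto $V(S_x(x),S_x(y)) = V(x,S_x(y))$. By Lemma \ref{DSx} this derivative is the map $w\mapsto -w$, whose image is all of $V(x,y)$. Hence $V(x,S_x(y)) = V(x,y)$, which means $S_x(C^\circ(x,y)) = C^\circ(x,y)$.

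With plane invariance established I would reduce to the two-dimensional classification. Fix $y$ linearly independent of $x$ and choose a linear isomorphism $A\colon V(x,y)\to\R^2$ with $A(C(x,y)) = \R^2_+$, and set $\xi := A(x) = (\xi_1,\xi_2)$, where $\xi_1,\xi_2 > 0$ since $x\in C^\circ(x,y)$. Then $\tilde S := A\circ S_x\circ A^{-1}$ is a bijective antihomogeneous order-antimorphism of $(\R^2_+)^\circ$, so by \cite[Theorem 3.2]{LRW2} it has the form $\tilde S(z_1,z_2) = (a_1/z_{\sigma(1)},a_2/z_{\sigma(2)})$ for a permutation $\sigma$ of $\{1,2\}$ and constants $a_1,a_2 > 0$. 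Conjugating Lemma \ref{DSx} by the linear map $A$ gives $D\tilde S(\xi) = -\mathrm{Id}$. The transposition $\sigma = (1\,2)$ produces a Jacobian at $\xi$ with vanishing diagonal, which cannot equal $-\mathrm{Id}$; hence $\sigma = \mathrm{Id}$. Matching $D\tilde S(\xi) = \mathrm{diag}(-a_1/\xi_1^2,-a_2/\xi_2^2)$ with $-\mathrm{Id}$ forces $a_i = \xi_i^2$, so $\tilde S(z_1,z_2) = (\xi_1^2/z_1,\xi_2^2/z_2)$, which is manifestly an involution. Transporting back yields $S_x\circ S_x = \mathrm{Id}$ on $C^\circ(x,y)$. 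On the ray $\R_{>0}x$ antihomogeneity and (1) give $S_x(S_x(\lambda x)) = S_x(\lambda^{-1}x) = \lambda x$. As every point of $C^\circ$ lies either on this ray or in some plane $C^\circ(x,y)$, we conclude $S_x\circ S_x = \mathrm{Id}$ on all of $C^\circ$.

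The step I expect to be the main obstacle is the plane-invariance claim: recognizing that the prescribed value of the derivative of $S_x$ at its fixed point $x$, supplied by Lemma \ref{DSx}, forces each $2$-dimensional subcone through $x$ to be preserved is exactly what permits the use of the explicit two-dimensional description of antihomogeneous order-antimorphisms. Once plane invariance is in place, ruling out the transposition and pinning down the constants from the derivative are routine finite-dimensional computations.
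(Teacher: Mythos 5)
Your proposal is correct, but for part (2) it takes a genuinely different route from the paper's. Part (1) agrees with the paper up to bookkeeping: the paper evaluates $S_x(x)=G_x^{-1}(g(x))=G_{g^{-1},g(x)}(g(x))=g^{-1}(g(x))=x$, while you note $G_x(x)=g(x)$ directly; both rest on Proposition \ref{prop:der}(3). For part (2), the paper neither states plane invariance nor revisits the two-dimensional classification: it writes $(S_x^2)_{xy}=T\circ S$ with $T:=(S_x)_{S_x(x)S_x(y)}$ and $S:=(S_x)_{xy}$, computes $\Delta_x^y S_x^2(x)=DT(S(x))DS(x)(y)=y$ by the chain rule and two applications of Lemma \ref{DSx}, and then invokes \cite[Theorem B]{NS2}: since $S_x^2$ is a homogeneous order-isomorphism of $C^\circ$, it is linear, and a linear map fixing $x$ whose directional derivatives at $x$ are all the identity must be $\mathrm{Id}$. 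You instead make explicit the plane invariance $V(x,S_x(y))=V(x,y)$ --- a fact only implicit in the paper's chain-rule step --- and then reuse \cite[Theorem 3.2]{LRW2} to pin down $S_x$ on each plane as $(z_1,z_2)\mapsto(\xi_1^2/z_1,\xi_2^2/z_2)$ in suitable coordinates, which is manifestly involutive; your exclusion of the transposition (zero diagonal in the Jacobian versus $-\mathrm{Id}$) and the matching of constants are both sound. What each approach buys: the paper's argument is shorter given that \cite[Theorem B]{NS2} is already in play (it is used right after Proposition \ref{prop:2.6}) and it avoids any case analysis on the permutation $\sigma$; yours avoids a second appeal to the Noll--Sch\"affer linearity theorem at this step and yields strictly more information, namely an explicit formula for $S_x$ on every plane through $x$, from which the involution, the fixed point, and the geodesic-reversing behavior of Lemma \ref{geo} on such planes can all be read off. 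The only step you leave implicit is that the span equality $V(x,S_x(y))=V(x,y)$ upgrades to $S_x(C^\circ(x,y))=C^\circ(x,y)$; this is exactly Lemma \ref{planes} applied to the bijective antihomogeneous order-antimorphism $S_x$, and is worth citing there.
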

\begin{proof}
To prove (1) note that  for $x\in C^\circ$ we have by Propositions \ref{prop:der}(3) and \ref{prop:2.6} that 
\[
S_x(x) = G^{-1}_x(g(x)) =G_{g^{-1},g(x)}(g(x)) = g^{-1}(g(x))=x.
\]

To show  (2) let $x,y\in C^\circ$ be linearly independent. For simplicity we write $T:=(S_x)_{S_x(x)S_x(y)}$ and $S:=(S_x)_{xy}$, so $(S_x^2)_{xy} = T\circ S$ and $S,T$ are Fr\'echet differentiable on $C^\circ(x,y)$ and $C^\circ(S_x(x),S_x(y))$ respectively. 
Then using the chain rule and Lemma \ref{DSx} we find that 
\[
\Delta_x^y S_x^2(x) =  \lim_{t\to 0} \frac{T(S(x+ty)) -T(S(x))}{t} =   DT(S(x))(DS(x))(y) =  -DS(x)(y) =y.
\]

Note that $S^2_x$ is a homogeneous order-isomorphism on $C^\circ$, and hence by \cite[Theorem B]{NS2} we know that it is linear. So, it follows from the previous equality that $S^2_x=\mathrm{Id}$ on $C^\circ$. 
\end{proof}

To proceed it is useful to recall a few facts about unique geodesics for Thompson's  metric from \cite[Section 2]{LR}. If $x\in( C^\circ,d_T)$, then there are two special types of geodesic lines through $x$. There are the so-called {\em type I geodesic lines} $\gamma$ which are the images of the geodesic paths,
\begin{equation}\label{typeI} 
\gamma(t):= e^tr+e^{-t}s\mbox{\quad for $t\in \R$,}
\end{equation} 
with $r,s\in\partial C$ and $r+s =x$. The {\em type II geodesic line} $\mu$ through $x$ is  the image of the geodesic path $\mu(t);=e^t x$ with $t\in\R$. 
The type I geodesics $\gamma$ have the property   that $M(u/v)=M(v/u)$ for all $u$ and $v$ on $\gamma$, and the type II geodesics have  the property that $M(u/v)=M(v/u)^{-1}$ for all $u$ and $v$ on $\mu$. 

Each unique geodesic line in $(C^\circ, d_T)$ is either of type I or type II, see \cite[Section 2]{LR}. Moreover, the type II geodesic is always unique \cite[Proposition 4.1]{LR}, 
but the type I geodesics may not be unique.  However,  if $C$ is strictly convex, then all type I geodesic lines are unique, see \cite[Theorem 4.3]{LR}. 
\begin{lemma}\label{geo}
Let $(V,C,u)$ be an order unit space with a strictly convex cone. 
If $\gamma\colon\R\to (C^\circ,d_T)$ is a  geodesic path with $\gamma(0)=x$, and $\gamma(\R)$ is a type I geodesic line, then $S_x(\gamma(t)) =\gamma(-t)$ for all $t\in\R$.
\end{lemma}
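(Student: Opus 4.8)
The plan is to reduce everything to the two-dimensional plane spanned by the type~I geodesic and to read off the action of $S_x$ from the explicit classification of antihomogeneous order-antimorphisms on $(\R^2_+)^\circ$. By definition of a type~I geodesic line we may write $\gamma(t)=e^tr+e^{-t}s$ with $r,s\in\partial C$ and $r+s=x=\gamma(0)$, as in \eqref{typeI}. Since $x\in C^\circ$, the points $r$ and $s$ are linearly independent (otherwise $x$ would lie on a boundary ray), so the whole geodesic lies in the plane $V(r,s)$. Writing $y:=\gamma(1)$ we have $V(x,y)=V(r,s)$ and $\gamma(\R)\subseteq C^\circ(x,y)$.

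First I would show that $S_x$ preserves this plane. Since $S_x$ is itself a bijective antihomogeneous order-antimorphism, the restriction $(S_x)_{xy}$ is Fr\'echet differentiable by Proposition~\ref{prop:der}(1), and its derivative at $x$ is an invertible linear map from $V(x,y)$ \emph{onto} $V(S_x(x),S_x(y))=V(x,S_x(y))$. On the other hand Lemma~\ref{DSx} gives $D(S_x)_{xy}(x)=-\mathrm{Id}$ on $V(x,y)$, whose range is $V(x,y)$ itself. Comparing ranges forces $V(x,S_x(y))=V(x,y)$, hence $S_x(y)\in V(x,y)$ and $S_x$ maps $C^\circ(x,y)$ onto itself.

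Thus $(S_x)_{xy}$ is a bijective antihomogeneous order-antimorphism of the two-dimensional cone $C^\circ(x,y)$. Choosing a linear isomorphism $A\colon V(x,y)\to\R^2$ with $A(r)=(1,0)$ and $A(s)=(0,1)$, so that $A(C(x,y))=\R^2_+$, $A(x)=(1,1)$ and $A(\gamma(t))=(e^t,e^{-t})$, the conjugate $h:=A\circ(S_x)_{xy}\circ A^{-1}$ is a bijective antihomogeneous order-antimorphism of $(\R^2_+)^\circ$. By \cite[Theorem 3.2]{LRW2} it has the form $h(z_1,z_2)=(a_1/z_{\sigma(1)},a_2/z_{\sigma(2)})$ for some permutation $\sigma$ of $\{1,2\}$ and constants $a_1,a_2>0$. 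Since $S_x(x)=x$ by Theorem~\ref{thm:symmetries}(1), we have $h(1,1)=(1,1)$, which forces $a_1=a_2=1$ in either case.

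The decisive step, and the only genuine obstacle, is to rule out the transposition $\sigma=(1\,2)$. Both permutations are compatible with the fixed-point condition and with $S_x\circ S_x=\mathrm{Id}$, so these alone do not suffice; the distinction must come from the derivative. Conjugating Lemma~\ref{DSx} by $A$ gives $Dh(1,1)=-I$. For $\sigma=\mathrm{id}$ one has $h(z_1,z_2)=(1/z_1,1/z_2)$ with $Dh(1,1)=-I$, which is consistent; whereas for $\sigma=(1\,2)$ one has $h(z_1,z_2)=(1/z_2,1/z_1)$ with $Dh(1,1)=\bigl(\begin{smallmatrix}0&-1\\-1&0\end{smallmatrix}\bigr)\neq -I$, a contradiction. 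Hence $h(z_1,z_2)=(1/z_1,1/z_2)$, and therefore $S_x(\gamma(t))=A^{-1}h(e^t,e^{-t})=A^{-1}(e^{-t},e^t)=\gamma(-t)$ for all $t\in\R$, as claimed.
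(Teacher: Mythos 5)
Your proposal is correct, but it takes a genuinely different route from the paper's. The paper argues directly on the geodesic: since $C$ is strictly convex, $\gamma$ is a \emph{unique} geodesic by \cite[Theorem 4.3]{LR}, so its image $\hat\gamma:=S_x\circ\gamma$ under the isometry $S_x$ is again a unique geodesic line through $x$, of type I because $S_x$ reverses $M(\cdot/\cdot)$; then \cite[Lemma 3.7]{LR} gives $\hat\gamma(t)=e^tu+e^{-t}v$ with $u+v=x$, and the derivative computation $\hat\gamma'(0)=D(S_x)_{rx}(x)(r-s)=-(r-s)$ from Lemma \ref{DSx} forces $u=s$, $v=r$. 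You instead restrict everything to the plane $V(r,s)$: you show $S_x$ preserves $C^\circ(x,y)$ by a neat comparison of the range statement in Proposition \ref{prop:der}(1) with Lemma \ref{DSx}, conjugate to $(\R^2_+)^\circ$, and invoke the planar classification \cite[Theorem 3.2]{LRW2} (already used in the proof of Proposition \ref{prop:der}), normalizing the constants via $S_x(x)=x$ and killing the transposed form via $Dh(1,1)=-I$. Both proofs pivot on the same key fact, $D(S_x)_{xy}(x)=-\mathrm{Id}$, and you correctly identify that the fixed point and involutivity alone cannot distinguish the two permutations; your version trades the unique-geodesic machinery of \cite{LR} for an explicit computation, making the lemma essentially a corollary of the Section 2 results, whereas the paper's formulation works purely with geodesics and so would adapt to settings where one knows uniqueness of type I geodesics without an explicit two-dimensional classification. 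One small step you leave implicit: to get $A(C(x,y))=\R^2_+$ from $A(r)=(1,0)$ and $A(s)=(0,1)$ you need $C(x,y)=\{\alpha r+\beta s\colon \alpha,\beta\geq 0\}$, i.e., that $\R_+r$ and $\R_+s$ are exactly the boundary rays of the $2$-dimensional cone $C(x,y)$; this follows from strict convexity (every boundary ray of $C$ is extreme, cf.\ \cite[A.5.1]{LNBook}) and deserves a sentence, but it is a minor gap, not a flaw in the argument.
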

\begin{proof}
If $\gamma\colon\R\to (C^\circ,d_T)$ is a  geodesic path with $\gamma(0)=x$, and $\gamma(\R)$ is a type I geodesic line, then there exist $r,s\in\partial C$ with $r+s=x$ and 
$\gamma(t)=e^tr+e^{-t}s$ for all $t\in\R$ by \cite[Lemma 3.7]{LR}. 
As $C$ is strictly convex, we know from \cite[Theorem 4.3]{LR} that $\gamma\colon\R\to (C^\circ,d_T)$ is a unique geodesic path. This implies  that $\hat\gamma(t):= S_x(\gamma(t))$, $t\in\R$, is also a unique geodesic path in $(C^\circ, d_T)$, as $S_x$ is an isometry under $d_T$. Moreover, as $M(S_x(y)/S_x(z))=M(z/y)$ for all $y,z\in C^\circ$, we know that 
\[
M(S_x(\gamma(t_1))/S_x(\gamma(t_2)))=M(\gamma(t_2)/\gamma(t_1))=M(\gamma(t_1)/\gamma(t_2)) = M(S_x(\gamma(t_2))/S_x(\gamma(t_1))),
\] 
so that $\hat\gamma(\R)$ is a  type I geodesic line though $x$. 

It now follows again from \cite[Lemma 3.7]{LR} that there exists $u,v\in\partial C$ such that  $u+v=x$ and $\hat\gamma(t) = e^t u+e^{-t}v$ for all $t\in\R$. 
Recall from  Proposition \ref{prop:der} that the restriction $(S_x)_{rx}$ of $S_x$ to $C^\circ(r,x)$ is Fr\'echet differentiable, and hence 
\[\hat\gamma'(0) = D(S_x)_{rx}(\gamma(0))(\gamma'(0)) = D(S_x)_{rx}(x)(r-s) = -r+s
\] 
by Lemma \ref{DSx}. But also $\hat\gamma'(0) =  u-v$. Combining this with the equalities $r+s=x=u+v$, we find that $u=s$ and $v=r$. Thus, 
$S_x(\gamma(t)) =\hat\gamma(t) = e^ts+e^{-t}r = \gamma(-t)$ for all $t\in\R$.  
\end{proof}

\begin{proposition}\label{fixedpoint}
Let $(V,C,u)$ be an order unit space with a strictly convex cone. For each $x\in C^\circ$ we have that $S_x$ has $x$ as a unique fixed point. 
\end{proposition}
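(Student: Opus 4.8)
The plan is to reduce the uniqueness question to a two-dimensional computation and then invoke the classification of antihomogeneous order-antimorphisms on $(\R^2_+)^\circ$. That $x$ is a fixed point is already Theorem \ref{thm:symmetries}(1), so only the uniqueness requires an argument. I would first dispose of the fixed points on the ray through $x$: if $y=\lambda x$ for some $\lambda>0$, then antihomogeneity of $S_x$ gives $S_x(y)=\tfrac{1}{\lambda}S_x(x)=\tfrac{1}{\lambda}x$, so $S_x(y)=y$ forces $\lambda=1$ and $y=x$.

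The substantive case is a fixed point $y$ that is linearly independent of $x$. The key observation is that $S_x$ then leaves the plane $V(x,y)$ invariant: since $S_x$ is a bijective antihomogeneous order-antimorphism, Lemma \ref{planes} shows it maps $C^\circ(x,y)$ onto $C^\circ(S_x(x),S_x(y))=C^\circ(x,y)$. Fixing a linear isomorphism $A\colon V(x,y)\to\R^2$ with $A(C(x,y))=\R^2_+$, the conjugate $h:=A\circ (S_x)_{xy}\circ A^{-1}$ is a bijective antihomogeneous order-antimorphism of $(\R^2_+)^\circ$. By \cite[Theorem 3.2]{LRW2} we then have $h((z_1,z_2))=(a_1/z_{\sigma(1)},a_2/z_{\sigma(2)})$ for some $a_1,a_2>0$ and a permutation $\sigma$ of $\{1,2\}$.

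The crux, and the step I expect to be the main obstacle, is ruling out the nontrivial permutation: if $\sigma$ swaps the coordinates, then the form $h((z_1,z_2))=(a_1/z_2,a_2/z_1)$ with $a_1=a_2$ has an entire curve $\{z_1z_2=a_1\}$ of fixed points, which would destroy uniqueness. I would eliminate this case using the derivative at $x$. By Proposition \ref{prop:der}(1) the restriction $(S_x)_{xy}$ is Fr\'echet differentiable on $C^\circ(x,y)$, and by Lemma \ref{DSx} its derivative at $x$ equals $-\mathrm{Id}$ on $V(x,y)$; conjugating, $Dh=-\mathrm{Id}$ at the point corresponding to $x$. But the swapped form has anti-diagonal Jacobian $\begin{pmatrix} 0 & -a_1/z_2^2 \\ -a_2/z_1^2 & 0\end{pmatrix}$, whose diagonal entries vanish, so it can never equal $-\mathrm{Id}$. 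Hence $\sigma$ must be the identity.

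With $\sigma$ the identity, $h((z_1,z_2))=(a_1/z_1,a_2/z_2)$, and the fixed-point equations $z_1^2=a_1$, $z_2^2=a_2$ have the single solution $(\sqrt{a_1},\sqrt{a_2})$ in $(\R^2_+)^\circ$. Since $x$ already corresponds to a fixed point of $h$, this unique fixed point must be $x$; consequently $y=x$, contradicting the assumed linear independence. This contradiction shows $x$ is the only fixed point of $S_x$. I expect the only delicate bookkeeping to be the (routine) verification that the linear identification $A$ carries the Fr\'echet derivative of $(S_x)_{xy}$ to the Jacobian of $h$, so that the comparison with $-\mathrm{Id}$ is legitimate.
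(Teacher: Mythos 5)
Your proof is correct, but it takes a genuinely different route from the paper. The paper argues via Thompson-metric type I geodesics: given a fixed point $y$ linearly independent of $x$, it rescales $y$ to $z:=\mu y$ with $\mu=M(x/y)^{1/2}M(y/x)^{-1/2}$ so that $M(x/z)=M(z/x)$, places $x$ and $z$ on a type I geodesic line $\gamma$ with $\gamma(0)=x$ (unique by strict convexity, \cite[Theorem 4.3]{LR}), and invokes Lemma \ref{geo}, which gives the reflection identity $S_x(\gamma(t))=\gamma(-t)$; since the ray through $y$ is $S_x$-invariant and meets $\gamma(\R)$ only at $z$, this forces $S_x(z)=z$, contradicting the reflection identity as $z\neq x$. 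You instead stay entirely inside the two-dimensional classification already exploited in the proof of Proposition \ref{prop:der}: Lemma \ref{planes} gives invariance of $C^\circ(x,y)$, \cite[Theorem 3.2]{LRW2} gives the coordinatewise-inversion normal form, and Lemma \ref{DSx} rules out the coordinate swap because an anti-diagonal Jacobian can never equal $-\mathrm{Id}$. You are right that the derivative step is genuinely needed and is the crux: the swap with $a_1=a_2$ fixes the whole hyperbola $\{z_1z_2=a_1\}$, so the normal form alone cannot finish the argument; and the bookkeeping you flag is indeed routine, since $A$ is linear and so $Dh(A(x))=A\,D(S_x)_{xy}(x)\,A^{-1}=-\mathrm{Id}$ by the chain rule. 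Both routes consume strict convexity, yours through the unique Hilbert-metric geodesics underlying Lemma \ref{planes}, the paper's through unique type I Thompson geodesics in Lemma \ref{geo}. What the paper's approach buys is economy in the larger scheme: the reflection identity of Lemma \ref{geo} is needed again in the proof of Proposition \ref{symB}, so Proposition \ref{fixedpoint} comes essentially for free from it. What your approach buys is independence from the geodesic machinery of \cite{LR}: given Proposition \ref{prop:der}, Lemma \ref{planes} and Lemma \ref{DSx}, your argument is self-contained and purely computational, and it yields the slightly finer local statement that on every slice $C^\circ(x,y)$ through a putative second fixed point, $S_x$ is linearly conjugate to $(z_1,z_2)\mapsto(a_1/z_1,a_2/z_2)$, which has exactly one fixed point in $(\R^2_+)^\circ$.
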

\begin{proof} 
Suppose by way of contradiction that $y\in C^\circ$ is a fixed point of $S_x$ and $y\neq x$. Then $y$ is linearly independent of $x$, as $S_x$ is antihomogeneous and $S_x(x)=x$. Define $\mu:=M(x/y)^{1/2}M(y/x)^{-1/2}$ and $z:=\mu y\in C^\circ$. Then $M(x/z)=M(z/x)$ and hence there exists a  type I geodesic path $\gamma\colon \R\to (C^\circ,d_T)$ through $x$ and $z$, with $\gamma(0)=x$. From Lemma \ref{geo} it follows that $S_x(\gamma(\R))=\gamma(\R)$, As $z$ is the unique point of intersection of $\gamma(\R)$ with the  invariant ray $R_y:=\{\lambda y\colon\lambda >0\}$, we conclude that $S_x(z)=z$. This, however, contradicts Lemma \ref{geo}, as $z\neq x$.  \end{proof}

\begin{remark}
The metric space $(C^\circ,d_T)$ is a natural example of a Banach-Finsler manifold, see \cite{Nu}. So, the results in this section show that if there exists a bijective antihomogeneous order-antimorphism on $C^\circ$ in a complete order unit space with strictly convex cone, then $(C^\circ,d_T)$ is a {\em globally symmetric} Banach-Finsler manifold, in the sense that for each $x\in C^\circ$ there exists an isometry $\sigma_x\colon C^\circ\to C^\circ$ such that $\sigma_x^2 =\mathrm{Id}$ and $x$ is an isolated fixed point of $\sigma_x$. Indeed, we can take $\sigma_x=S_x$. It is interesting to understand which complete order unit spaces $(C^\circ,d_T)$  are globally symmetric Banach-Finsler manifolds. It might well be true that these are precisely the JB-algebras. 
\end{remark}
\section{Smoothness of the cone}
 Throughout this section we will assume that $\dim V\geq 3$. 

 We will show that if $(V,C,u)$ is a complete order unit space with a strictly convex cone and there exists an antihomogeneous order-antimorphism $g\colon C^\circ\to C^\circ$, then $C$ is a {\em smooth} cone, that is to say, for each $\eta\in\partial C$ with $\eta\neq 0$ there exists a unique $\phi\in S(V)$ such that $\phi(\eta)=0$. Before we prove this we make the following elementary observation. 
 \begin{lemma}\label{lem:2.2} If $(V,C,u)$ is an order unit space and $\eta\in\partial C$ with $\eta\neq 0$, then for each $x\in C^\circ$ and $y:=(1-s)\eta+sx$, with $0<s\leq 1$, we have that 
 \[
 M(x/y)=\frac{\phi(x)}{\phi(y)}=\frac{1}{s}
 \]
 for each $\phi\in S(V)$ with $\phi(\eta)=0$. 
 \end{lemma}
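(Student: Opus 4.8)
The plan is to treat the two equalities separately: the explicit computation $\phi(x)/\phi(y)=1/s$ for the functionals, and the identity $M(x/y)=1/s$, the latter split into a matching upper and lower bound. First I would record that $M(x/y)$ makes sense: since $0<s\leq 1$ we have $sx\in C^\circ$ and $(1-s)\eta\in C$, and as the sum of an interior point of $C$ and a point of $C$ lies in $C^\circ$, it follows that $y\in C^\circ$. I would also note at the outset that a state $\phi$ with $\phi(\eta)=0$ does exist: since $\eta\in\partial C$ with $\eta\neq 0$ and $C$ is a closed cone with nonempty interior, the supporting hyperplane theorem yields a nonzero positive functional vanishing at $\eta$, which after normalising by its value at $u$ (positive, since $u\in C^\circ$) gives the desired state.

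For the functional computation, fix any $\phi\in S(V)$ with $\phi(\eta)=0$. Then $\phi(y)=(1-s)\phi(\eta)+s\phi(x)=s\phi(x)$. Because $x\in C^\circ$ there is an $\epsilon>0$ with $x-\epsilon u\in C$, hence $\phi(x)\geq\epsilon\phi(u)=\epsilon>0$, so one may divide to obtain $\phi(x)/\phi(y)=1/s$.

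It remains to show $M(x/y)=1/s$. For the upper bound I would observe the identity $\tfrac{1}{s}y-x=\tfrac{1-s}{s}\eta$, whose right-hand side lies in $C$ since $\eta\in C$ and $\tfrac{1-s}{s}\geq 0$; thus $x\leq_C\tfrac{1}{s}y$ and $M(x/y)\leq 1/s$. For the lower bound I would invoke the variational formula \eqref{eq:var}, which gives $M(x/y)=\max_{\psi\in S(V)}\psi(x)/\psi(y)\geq\phi(x)/\phi(y)=1/s$ by the previous paragraph. Combining the two bounds yields $M(x/y)=1/s$.

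The computation is entirely routine; the only real content is the lower bound, and the step I expect to carry the weight is the existence of a state annihilating $\eta$. This is exactly where the hypothesis $\eta\in\partial C$ enters, since a positive functional can vanish at a nonzero point of $C$ only on the boundary, and it reflects the geometric fact that $y$ is ``as small as possible'' relative to $x$ precisely in the direction of the boundary point $\eta$. If one prefers to avoid the variational formula, the same lower bound can be obtained directly: assuming $\beta y-x\in C$ and applying the positive functional $\phi$ gives $(\beta s-1)\phi(x)\geq 0$, whence $\beta\geq 1/s$.
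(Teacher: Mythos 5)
Your proof is correct, and it takes a more elementary route than the paper's. The paper disposes of the lemma in two lines by citing the formula from \cite[Section 2.1]{LNBook} for $M$ along a line segment meeting the boundary: since $\eta$, $y$, $x$ are collinear with $y-\eta=s(x-\eta)$, one has $M(x/y)=\|\eta-x\|_u/\|\eta-y\|_u=1/s$, and then $\phi(x)/\phi(y)=1/s$ is the same one-line computation $\phi(y)=s\phi(x)$ that you perform. You instead derive both bounds from the definition: the upper bound via the identity $\tfrac{1}{s}y-x=\tfrac{1-s}{s}\eta\in C$, and the lower bound via the variational formula \eqref{eq:var} applied to the given $\phi$ (or, as in your closing remark, by applying $\phi$ directly to $\beta y-x\in C$, which bypasses \eqref{eq:var} as well). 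What your approach buys is self-containedness; what the paper's citation buys is brevity and the projective picture (a ratio of distances to the boundary point along the ray through $\eta$) that recurs later in the smoothness argument. Two small remarks. First, your preliminary Hahn--Banach construction of a state annihilating $\eta$ is not required by the statement, which merely quantifies over such $\phi$; it does carry your lower bound, but if you want $M(x/y)=1/s$ unconditionally you can avoid states entirely: if $\beta y-x\in C$ with $\beta s<1$, then $x\leq_C\tfrac{\beta(1-s)}{1-\beta s}\eta$, which would place a positive multiple of $\eta$ in $C^\circ$ (interior point plus cone point), contradicting $\eta\in\partial C$; hence $\beta\geq 1/s$. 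Second, your verifications that $y\in C^\circ$ and that $\phi(x)>0$ for $x\in C^\circ$ are correct and worth keeping, since the paper leaves both implicit.
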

 \begin{proof}
 By \cite[Section 2.1]{LNBook} we know that 
 \[
 M(x/y) = \frac{\|\eta - x\|_u}{\|\eta -y\|_u} =\frac{1}{s}.
 \]
 But also $1/s = \phi(x)/\phi(y)$ for all states $\phi\in S(V)$ with $\phi(\eta)=0$. 
 \end{proof}
 
 \begin{theorem}\label{thm:2.3} 
 If $(V,C,u)$ is an order unit space with a strictly convex cone and there exists a bijective antihomogeneous order-antimorphism $g\colon C^\circ\to C^\circ$, then $C$ is a smooth cone.
 \end{theorem}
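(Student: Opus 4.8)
The plan is to prove the two defining properties of smoothness --- existence and uniqueness of a state $\phi\in S(V)$ with $\phi(\eta)=0$ --- separately, with essentially all of the difficulty in uniqueness. Existence is routine: given $\eta\in\partial C$ with $\eta\neq0$, a Hahn--Banach separation argument yields a nonzero positive functional vanishing on $\eta$, and dividing by its value at the order unit $u\in C^\circ$ produces a state $\phi\in S(V)$ with $\phi(\eta)=0$. For uniqueness I would argue by contradiction: suppose $\phi_1,\phi_2\in S(V)$ are distinct with $\phi_1(\eta)=\phi_2(\eta)=0$, and aim to contradict strict convexity of $C$.

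The first move is to replace the boundary point $\eta$ by an interior approximating family and push it through $g$. Fix $x_0\in C^\circ$ and set $y_s:=(1-s)\eta+sx_0\in C^\circ$ for $s\in(0,1]$; by Lemma \ref{lem:2.2} we have $M(x_0/y_s)=1/s$, and a direct computation gives $M(y_s/x_0)=s+(1-s)M(\eta/x_0)$. The crucial structural point is that the whole curve lies in the plane $V(\eta,x_0)$, so by Lemma \ref{planes} the image curve $s\mapsto g(y_s)$ is confined to a single two-dimensional plane $W$. Combining this with Lemma \ref{lem:2.1} shows that the rescaled points $\tilde w_s:=s\,g(y_s)$ satisfy $M(\tilde w_s/g(x_0))=1$ and $M(g(x_0)/\tilde w_s)\to\infty$ as $s\to0$; since $\tilde w_s$ is then order-bounded by $g(x_0)$ inside the finite-dimensional plane $W$, it is precompact and every cluster point $\zeta$ lies in $\partial C\setminus\{0\}$. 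This $\zeta$ is the boundary value of $g$ at $\eta$, and it is precisely the reduction to finite-dimensional sections that restores the compactness an infinite-dimensional Rademacher theorem would have supplied.

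The heart of the argument is to use the \emph{global} Gateaux derivative to rule out a second supporting state. By Proposition \ref{prop:2.6} the map $G_{y_s}$ is a single linear automorphism of $C$ defined on all of $V$, coherently across every plane through $y_s$ --- this simultaneous differentiability in all directions is the resource that two-dimensional analysis alone cannot provide. Since $G_{y_s}(y_s)=g(y_s)$ by Proposition \ref{prop:der}(3) and $y_s=(1-s)\eta+sx_0$, linearity yields $s(1-s)G_{y_s}(\eta)=\tilde w_s-s^2G_{y_s}(x_0)$, from which one extracts that $s\,G_{y_s}(\eta)\to\zeta$. Passing to the adjoints $G_{y_s}^*$, which preserve the dual cone, I would track how they transport the face $F_\eta:=\{\phi\in S(V):\phi(\eta)=0\}$: the identity \eqref{eq:var} together with $M(x_0/y_s)=M(g(y_s)/g(x_0))$ identifies $F_\eta$ as exactly the set of states attaining the maximum defining $M(x_0/y_s)$, and $g$ carries this data to the analogous data at $\zeta$ inside $W$. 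The claim is then that the single-valuedness of $G_{y_s}$ forces these normalised adjoint images to concentrate on one functional, so that $F_\eta$ is a singleton. Equivalently, were $\phi_1\neq\phi_2$ both to vanish on $\eta$, transporting under $g$ the unique type I geodesics reversed by the symmetries $S_x$ (Lemma \ref{geo}) would make two distinct boundary rays of $C$ collapse onto the single ray through $\zeta$, and unfolding this back would place a genuine segment in $\partial C$, contradicting strict convexity.

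I expect the main obstacle to be exactly the asymptotic control of $G_{y_s}$, equivalently of the boundary map $\eta\mapsto\zeta$, as $y_s\to\eta$: in infinite dimensions there is no differentiation-almost-everywhere theorem and no compactness of the state space beyond w*-compactness, so the limiting rank-one behaviour of the normalised adjoints cannot be asserted abstractly. My strategy for overcoming this is to carry out every limit inside the two-dimensional sections supplied by Lemma \ref{planes}, where $C(x,y)\cong\R^2_+$ and $g$ restricts to a map of the explicit hyperbolic form used in the proof of Proposition \ref{prop:der}, so that $\zeta$, the derivatives $G_{y_s}$, and the concentration of $F_\eta$ can all be computed by hand; the coherence of these sectionwise computations across different planes is guaranteed by the fact, from Proposition \ref{prop:2.6}, that they are restrictions of one global linear map. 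This is also where some of Walsh's horofunction method survives implicitly, since $\zeta$ plays the role of the limit of the geodesic rays used to define Hilbert-metric horofunctions.
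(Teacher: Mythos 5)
There is a genuine gap, and it sits exactly where you predicted it would: the uniqueness mechanism. Your central step --- ``the single-valuedness of $G_{y_s}$ forces the normalised adjoint images to concentrate on one functional, so that $F_\eta$ is a singleton'' --- is the theorem restated, not an argument: for each fixed $s$, $G_{y_s}$ is just a linear automorphism of $C$ (Proposition \ref{prop:2.6}), these automorphisms degenerate as $s\to 0$, and nothing you cite controls the limit behaviour of their adjoints. Your fallback is actually wrong: non-smoothness at $\eta$ means the face $F_\eta=\{\phi\in S(V)\colon\phi(\eta)=0\}$ is a nontrivial segment \emph{in the state space} --- a corner of $C$, dual to a flat of $C^*$ --- and corners are perfectly compatible with strict convexity of $C$ (the cone over the intersection of two Euclidean balls is strictly convex but not smooth), so ``a genuine segment in $\partial C$, contradicting strict convexity'' is not a contradiction you have manufactured; you would first need a duality transport turning the dual segment $[\tau,\nu]$ into primal data, and that is precisely the hard part. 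The paper supplies it by comparing \emph{two} approach directions to $\eta$: it takes segments $[\eta,x]$ and $[\eta,y]$, applies the normalised symmetry $f$ built from $S_u$ (a $d_H$-isometry fixing $u$), obtains image geodesics with endpoints $\zeta_1,\zeta_2$, and proves $\zeta_1=\zeta_2$ by a Gromov-product argument: if $\zeta_1\neq\zeta_2$, \cite[Theorem 5.2]{KN} bounds $d_H(f(x_s),u)+d_H(f(y_s),u)-d_H(f(x_s),f(y_s))$, while a cross-ratio estimate shows $d_H(x_s,y_s)$ stays bounded as $d_H(x_s,u),d_H(y_s,u)\to\infty$, so the same quantity (preserved by the isometry) blows up. Then $\tau(z)=\lim_{s\to 0}M(z/x_s)M(u/x_s)^{-1}=M(\zeta_1/S_u(z))M(\zeta_1/u)^{-1}$ and likewise $\nu(z)=M(\zeta_2/S_u(z))M(\zeta_2/u)^{-1}$, whence $\tau=\nu$ since $\spn(z,\eta,u)=W$. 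Your proposed substitute --- doing all limits in 2-dimensional sections via the explicit inversion normal form --- cannot replace the hyperbolicity step, because two distinct planes through $\eta$ meet $C^\circ$ in sets whose only common asymptote is the boundary ray $\R\eta$, the linear conjugacies $A,B$ and parameters $a_1,a_2,\sigma$ of the normal form vary freely from plane to plane, and the coherence from Proposition \ref{prop:2.6} is anchored at an interior base point and degenerates as $y_s\to\eta$; sectionwise computations therefore never compare the two directions whose comparison is the whole content of uniqueness.

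There is also a concrete false asymptotic in your preparatory step. From $G_{y_s}(y_s)=g(y_s)$ your algebra $s(1-s)G_{y_s}(\eta)=\tilde w_s-s^2G_{y_s}(x_0)$ is correct, but the extraction $s\,G_{y_s}(\eta)\to\zeta$ requires $s^2G_{y_s}(x_0)\to 0$, which fails: in the normal form $h(z_1,z_2)=(a_1/z_1,a_2/z_2)$ with $A\eta=(1,0)$ and $Ax_0=(a,b)$, $b>0$, one has $-Dh(Ay_s)=\mathrm{diag}(a_1/z_1^2,\,a_2/(sb)^2)$ with $z_1\to1$, so $G_{y_s}(\eta)$ stays \emph{bounded} (hence $s\,G_{y_s}(\eta)\to0$), while $s^2G_{y_s}(x_0)\to(0,a_2/b)=\zeta$; the blow-up is transverse to $\eta$, the opposite of what your adjoint analysis assumes. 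On the positive side, your setup is sound and consistent with the paper's toolkit: existence of a supporting state is indeed routine; $M(y_s/x_0)=(1-s)M(\eta/x_0)+s$ and $M(\tilde w_s/g(x_0))=1$ are correct; the identification of $F_\eta$ as the argmax set for $M(x_0/y_s)$ is exactly right (and is Lemma \ref{lem:2.2} in disguise); and confining the image curve to one plane via Lemma \ref{planes} with the compactness of order intervals there is a legitimate replacement for finite-dimensional compactness. What is missing is the paper's two-ray comparison with the Karlsson--Noskov bound and the Proposition \ref{horo}-style limit formulas that convert the dual face $[\tau,\nu]$ into evaluations at the single boundary point $\zeta_1=\zeta_2$.
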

 \begin{proof}
 Let $\rho\in S(V)$ be a strictly positive state, which exists by Lemma \ref{states}. 
 Suppose by way of contradiction that there exist $\eta\in\partial C$ with $\rho(\eta)=1$ and states $\phi\neq \psi$ such that $\phi(\eta)=0=\psi(\eta)$. As $\phi\neq\psi$, there exists $x\in V$ such that $\phi(x)\neq \psi(x)$. Note that if $\alpha x+\beta \eta +\gamma u=0$ for some $\alpha,\beta,\gamma\in \R$, then $\alpha\phi(x) +\gamma = \alpha\psi(x)+\gamma=0$, which yields $\alpha =0$ and $\gamma=0$. This shows that $x$, $\eta$ and $u$ are linearly independent. 
 
 Let $W:=\spn(x,\eta,u)$ and $K:=W\cap C$. As $\dim V\geq 3$ and $u\in C^\circ$, $K$ is a 3-dimensional, strictly convex,  closed cone in $W$ containing $u$ in its interior. Let $S(W)$ be the state space of the order unit space 
$(W,K,u)$. Note that the restrictions of $\phi$, $\psi$, $\rho$ to $W$, denoted $\bar{\phi}$, $\bar{\psi}$, and $\bar{\rho}$ respectively, are in $S(W)$. Moreover $\bar{\rho}(w)>0$ for all $w\in K\setminus\{0\}$, and hence 
\[
\Omega:=\{ w\in K\colon \bar{\rho}(w)=1\}
\]
is a 2-dimensional, strictly convex, compact set, with $\eta$ in its (relative) boundary. We also know that $S(W)$ is a compact, convex subset of $W^*$.

Let $F:=\{\zeta\in S(W)\colon \zeta(\eta)=0\}$, which is a closed face of $S(W)$. As $F$ contains $\bar{\phi}$ and $\bar{\psi}$ which are not equal, $F$ is a straight line segment, say $[\tau,\nu]$ with $\tau\neq \nu$. 
Let $x,y\in\partial \Omega$ be such that $u$ is between the straight line segments $[\eta,x]$ and $[\eta,y]$, as in Figure \ref{fig:1}. 
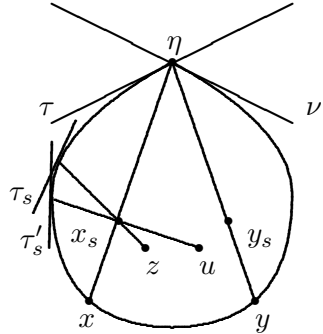
\begin{figure}[h]
\begin{center}
\begin{picture}(200, 120)
\thicklines
   \curve(100,100, 55,50, 100,0, 145,50, 100,100)
\put(98,105){$\eta$}
\put(110,20){$u$}
\put(110,30){\circle*{3.0}}
\put(90,20){$z$}
\put(90,30){\circle*{3.0}}
\put(131,0){$y$}
\put(131,10){\circle*{3.0}}
  \curve(131,10, 100,100)
\put(65,0){$x$}
\put(69,10){\circle*{3.0}}
  \curve(69,10, 100,100) 

\put(62,32){$x_s$}
\put(80,40){\circle*{3.0}}

 \curve(90,30, 58,62) 
 \curve(48,43, 64,78)

 \curve(110,30, 56,48) 
 \put(39,48){$\tau_s$}
 \curve(54,30, 55,70) 
\put(43,30){$\tau'_s$}

\put(128,32){$y_s$}
\put(121,40){\circle*{3.0}}


\put(100,100){\circle*{3.0}}
\put(55,77){\line(2,1){90}}
\put(50,80){$\tau$}
\put(145,77){\line(-2,1){90}}
\put(150,80){$\nu$}
   \end{picture}
   \caption{Point of non-smoothness}
   \label{fig:1}
  \end{center}
  \end{figure}

Now let $z\in\Omega\cap C^\circ$ also be between the segments $[\eta,x]$ and $[\eta,y]$ such that $\spn(z,\eta,u) = W$. For $0<s<1$, let $x_s:=(1-s)\eta+sx$ and $y_s:=(1-s)\eta+sy$. 
By Lemma \ref{lem:2.2} there exists $\tau_s,\tau'_s\in S(W)$  such that 
\[
M(z/x_s)=\frac{\tau_s(z)}{\tau_s(x_s)}\mbox{\quad and\quad }M(u/x_s) =\frac{\tau_s'(u)}{\tau'_s(x_s)}
\]
for $0<s<1$.  

Then 
\[
\tau_s'(z) =\frac{\tau_s'(z)}{\tau_s'(x_s)}\frac{\tau_s'(x_s)}{\tau_s'(u)}\leq M(z/x_s)M(u/x_s)^{-1}\leq\frac{\tau_s(z)}{\tau_s(x_s)}\frac{\tau_s(x_s)}{\tau_s(u)}\leq \tau_s(z) 
\]
for all $0<s<1$. As $\tau_s(z)\to\tau(z)$ and $\tau_s'(z)\to \tau(z)$ as $s\to 0$, we conclude that 
\[
\lim_{s\to 0} M(z/x_s)M(u/x_s)^{-1} =\tau(z).
\]
In the same way it can be shown that 
\[
\lim_{s\to 0} M(z/y_s)M(u/y_s)^{-1} =\nu(z).
\]

We will now show that $\tau(z)=\nu(z)$, which implies that $\tau=\nu$, as $\tau(\eta)=\nu(\eta)=0$, $\tau(u)=\nu(u)=1$ and $\spn(z,\eta,u)=W$. 
This gives the desired contradiction. To prove the equality we use the symmetry $S_u\colon C^\circ\to C^\circ$ at $u$. Let $f\colon \Sigma_\rho\to\Sigma_\rho$ be given by 
\[
f(v) =\frac{S_u(u)}{\rho(S_u(v))}\mbox{\quad for all }v\in \Sigma_\rho=\{w\in C^\circ\colon \rho(w) =1\}.
\] 
Thus, $f$ is an isometry on $(\Sigma_\rho,d_H)$. As $C$ is strictly convex, the segments $(x,\eta)$ and $(y,\eta)$ are unique geodesic lines  in $(\Sigma_\rho,d_H)$. So,  $f((x,\eta))$ and $f((y,\eta))$ are unique geodesic lines, and hence there exist $x',y',\zeta_1,\zeta_2\in\partial \Sigma_\rho$ so that $f((x,\eta))=(x',\zeta_1)$ with $\lim_{s\to 0}f(x_s) = \zeta_1$, and 
$f((y,\eta))=(y',\zeta_2)$ with $\lim_{s\to 0}f(y_s) = \zeta_2$. 

We claim that $\zeta_1=\zeta_2$. Suppose by way of contradiction that $\zeta_1\neq\zeta_2$. Then using \cite[Theorem 5.2]{KN} we know that there exists a constant $C_0<\infty$ such that 
\begin{equation}\label{grom}
\limsup_{s\to 0}\, d_H(f(x_s),u)+d_H(f(y_s),u)-d_H(f(x_s),f(y_s))\leq C_0,
\end{equation}
as $\Sigma_\rho$ is strictly convex. 

However,  we know (see \cite[Section 2.1]{LNBook}) that 
\[
d_H(x_s,y_s) =\log \frac{\|y_s -w'_s\|}{\|x_s -w'_s\|}\frac{\|x_s -v'_s\|}{\|y_s -v'_s\|}
\] 
for all $0<s<1$, where $w_s',v_s'\in\partial \Omega$. Let $w_s,v_s$ be on the lines $\ell_1$ and $\ell_2$  as in Figure \ref{fig:2}, where $\ell_1$ and $\ell_2$ are fixed. For $s>0$ sufficiently small 
\[
\frac{\|y_s -w'_s\|}{\|x_s -w'_s\|}\frac{\|x_s -v'_s\|}{\|y_s -v'_s\|}\leq \frac{\|y_s -w_s\|}{\|x_s -w_s\|}\frac{\|x_s -v_s\|}{\|y_s -v_s\|}.
\]
By projective invariance of the cross-ratio we know there exists $C_1<\infty$ such that 
\[
\frac{\|y_s -w_s\|}{\|x_s -w_s\|}\frac{\|x_s -v_s\|}{\|y_s -v_s\|}= C_1\mbox{\quad for all $s>0$ sufficiently small.}
\]
Thus,  $\lim\sup_{s\to 0} d_H(x_s,y_s)\leq \log C_1$. 
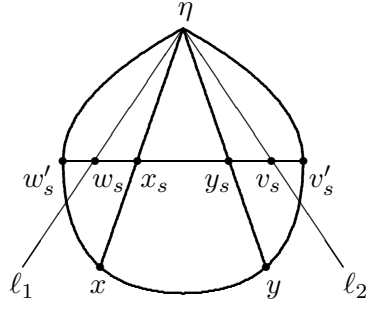
\begin{figure}[h]
\begin{center}
\begin{picture}(200, 110)
\thicklines
   \curve(100,100, 55,50, 100,0, 145,50, 100,100)
\put(98,105){$\eta$}
\put(131,0){$y$}
\put(131,10){\circle*{3.0}}
  \curve(131,10, 100,100)
\put(65,0){$x$}
\put(69,10){\circle*{3.0}}
  \curve(69,10, 100,100) 

\put(84,40){$x_s$}
\put(83, 50){\circle*{3.0}}

\put(66,40){$w_s$}
\put(67, 50){\circle*{3.0}}
\put(40,40){$w'_s$}
\put(55, 50){\circle*{3.0}}

\put(147,40){$v'_s$}
\put(145, 50){\circle*{3.0}}
\put(127,40){$v_s$}
\put(133, 50){\circle*{3.0}}

\put(108,40){$y_s$}
\put(117,50){\circle*{3.0}}
\thinlines
\put(100,100){\line(-2,-3){60}}

\put(100,100){\line(2,-3){60}}

\put(35,0){$\ell_1$}
\put(160,0){$\ell_2$}

\put(55,50){\line(1,0){90}}


   \end{picture}
   \caption{cross-ratios}
   \label{fig:2}
  \end{center}
  \end{figure}

As $f$ is an isometry under $d_H$ with $f(u)=u$, we deduce  that 
\[
d_H(f(x_s),u)+d_H(f(y_s),u)-d_H(f(x_s),f(y_s)) = d_H(x_s,u)+d_H(y_s,u)-d_H(x_s,y_s)\to\infty,
\]
as $s\to 0$. This contradicts (\ref{grom}), and hence  $\zeta_1=\zeta_2$.

Now note that 
\begin{eqnarray*}
\tau(z) & = & \lim_{s\to 0} M(z/x_s)M(u/x_s)^{-1}\\ 
   & = & \lim_{s\to 0} M(S_u(x_s)/S_u(z))M(S_u(x_s)/u)^{-1}\\
   & = & \lim_{s\to 0} M(f(x_s)/S_u(z))M(f(x_s)/u)^{-1}\\
   & = &  M(\zeta_1/S_u(z))M(\zeta_1/u)^{-1}.
\end{eqnarray*} 
Likewise $\nu(z) =   M(\zeta_2/S_u(z))M(\zeta_2/u)^{-1}$, which shows that $\tau(z) =\nu(z)$, as $\zeta_1=\zeta_2$. This completes the proof.
 \end{proof}

\begin{lemma} \label{lem:2.4}
Let $(V,C,u)$ be an order unit space with a smooth cone, $\eta\in\partial C\setminus\{0\}$, and $\phi\in S(V)$ be such that $\phi(\eta)=0$. Suppose that  $z\in C$ with $\phi(z)>0$, and for $0<s\leq 1$ let $y_s:=(1-s)\eta +s u$  and $z_s:=(1-s)z +s u$ in $C^\circ$. If $\phi_s\in S(V)$ is such that $M(z_s/y_s) =\phi_s(z_s)/\phi_s(y_s)$ for $0<s\leq 1$, then $\phi_s(\eta)\to 0$,  as $s\to 0$, and $(\phi_s)$ w*-converges  to $ \phi$. 
 \end{lemma}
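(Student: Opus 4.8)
The plan is to run everything through the variational formula (\ref{eq:var}), which gives $M(z_s/y_s)=\max_{\psi\in S(V)}\psi(z_s)/\psi(y_s)$, together with the smoothness hypothesis, which tells us that $\phi$ is the \emph{unique} state with $\phi(\eta)=0$. The whole argument rests on a single observation: $M(z_s/y_s)$ blows up as $s\to 0$, and a maximizing state $\phi_s$ can only keep pace with this blow-up by sending $\eta$ to $0$. Uniqueness of the annihilating state then upgrades $\phi_s(\eta)\to 0$ to full w*-convergence.

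First I would record that, since $\psi(u)=1$ for every state, the identity
\[
\frac{\psi(z_s)}{\psi(y_s)}=\frac{(1-s)\psi(z)+s}{(1-s)\psi(\eta)+s}
\]
holds for all $\psi\in S(V)$ and $0<s\leq 1$. Specializing to $\psi=\phi$, where $\phi(\eta)=0$ and $\phi(z)>0$, yields the lower bound $M(z_s/y_s)\geq \big((1-s)\phi(z)+s\big)/s\to\infty$ as $s\to 0^+$. This is the key input.

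Next I would feed $\phi_s$ into the same identity, obtaining $M(z_s/y_s)=\big((1-s)\phi_s(z)+s\big)/\big((1-s)\phi_s(\eta)+s\big)$. Because $z\leq_C\|z\|_u u$, each numerator is at most $\|z\|_u+1$, while the left-hand side tends to infinity; hence the denominator $(1-s)\phi_s(\eta)+s$ tends to $0$. Since $s\to 0$ and $1-s\to 1$, this forces $\phi_s(\eta)\to 0$, which is the first assertion. For the second, I would invoke w*-compactness of $S(V)$ (Banach--Alaoglu) together with the uniqueness coming from smoothness: if $(\phi_s)$ failed to w*-converge to $\phi$, there would be a w*-open neighbourhood $U$ of $\phi$ and a subnet $(\phi_{s_\alpha})$ with $s_\alpha\to 0$ contained in the w*-compact set $S(V)\setminus U$, whose w*-limit $\psi$ would satisfy $\psi\neq\phi$ yet $\psi(\eta)=\lim_\alpha\phi_{s_\alpha}(\eta)=0$, contradicting the uniqueness of the state annihilating $\eta$.

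I do not anticipate a genuine obstacle; the only point needing care is that, because the w*-topology on $S(V)$ need not be metrizable, the convergence step in the last paragraph should be carried out with nets and the compactness/uniqueness mechanism above, rather than by extracting a sequence.
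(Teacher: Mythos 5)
Your proof is correct and follows essentially the same route as the paper's: you force $M(z_s/y_s)\to\infty$ by testing against $\phi$, deduce $\phi_s(\eta)\to 0$ from the boundedness of $\phi_s(z_s)$, and then combine the w*-compactness of $S(V)$ with the uniqueness of the state annihilating $\eta$ (smoothness) via a correctly handled subnet argument. The only cosmetic differences are that you compute $\phi_s(y_s)=(1-s)\phi_s(\eta)+s$ directly where the paper estimates $|\phi_s(\eta)|\leq\|\eta-y_s\|_u+|\phi_s(y_s)|$, and that you phrase the final net argument contrapositively rather than via the ``every subnet has a further subnet converging to $\phi$'' criterion.
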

 \begin{proof}
 Note that  $M(z_s/y_s)=\phi_s(z_s)/\phi_s(y_s)\geq\phi(z_s)/\phi(y_s)= \frac{1-s}{s}\phi(z) +1\to\infty$, as $s\to 0$. As $|\phi_s(z_s)|\leq \|z_s\|_u\leq (1-s)\|z\|_u +s\|u\|_u\leq \|z\|_u +1$, we deduce that $\phi_s(y_s)\to 0$ as $s\to 0$. So, 
 \[
 |\phi_s(\eta)|\leq |\phi_s(\eta) -\phi_s(y_s)|+|\phi_s(y_s)|\leq \|\eta -y_s\|_u +|\phi_s(y_s)|\to 0\mbox{\quad as } s\to0.
 \]
Now consider any subnet $(\phi_{s'})$ of $(\phi_s)$ in $S(V)$. It  has a w*-convergent subnet with limit say $ \psi$, as $S(V)$ is w*-compact. By the first part of the lemma we know that 
$\psi(\eta)=0$, and hence $\psi =\phi$, since $C$ is smooth. This shows that $(\phi_s)$ w*-converges to $\phi$.  
 \end{proof}
 
 \begin{proposition}\label{horo} 
 Let $(V,C,u)$ be an order unit space with a smooth cone, $\eta\in\partial C\setminus\{0\}$, and $\phi\in S(V)$ be such that $\phi(\eta)=0$. 
 Suppose that  $z\in C$ with $\phi(z)>0$ and for $0<s\leq 1$ let $y_s:=(1-s)\eta +s u$  and $z_s:=(1-s)z +s u$ in $C^\circ$.
 Then 
 \[
\lim_{s\to 0} M(z_s/y_s)M(u/y_s)^{-1} =\phi(z). 
 \]
 \end{proposition}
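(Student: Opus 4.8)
The plan is to write the quantity $M(z_s/y_s)M(u/y_s)^{-1}$ explicitly in terms of states and then to squeeze it between a lower and an upper bound, both of which tend to $\phi(z)$ as $s\to 0$.

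First I would dispose of the factor $M(u/y_s)$. Since $u\in C^\circ$ and $y_s=(1-s)\eta+su$, Lemma \ref{lem:2.2} applied with $x=u$ gives $M(u/y_s)=1/s$, so that $M(z_s/y_s)M(u/y_s)^{-1}=s\,M(z_s/y_s)$. Next, by the variational formula (\ref{eq:var}) and the w*-compactness of $S(V)$, I would fix a maximizing state $\phi_s\in S(V)$ with $M(z_s/y_s)=\phi_s(z_s)/\phi_s(y_s)$; such a $\phi_s$ exists because $\zeta\mapsto\zeta(z_s)/\zeta(y_s)$ is w*-continuous on $S(V)$, noting that $\zeta(y_s)>0$ for every state since $y_s\in C^\circ$ is an order unit. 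Using $z_s=(1-s)z+su$, $y_s=(1-s)\eta+su$ and $\phi_s(u)=1$, this yields
\[
s\,M(z_s/y_s)=s\cdot\frac{(1-s)\phi_s(z)+s}{(1-s)\phi_s(\eta)+s}.
\]

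For the lower bound I would test (\ref{eq:var}) against the given state $\phi$ itself: since $\phi(\eta)=0$ and $\phi(u)=1$, we have $M(z_s/y_s)\geq\phi(z_s)/\phi(y_s)=\big((1-s)\phi(z)+s\big)/s$, hence $s\,M(z_s/y_s)\geq(1-s)\phi(z)+s\to\phi(z)$. For the upper bound I would use positivity: as $\eta\in C$ and $\phi_s$ is a state, $\phi_s(\eta)\geq 0$, so the denominator above is at least $s$; since the numerator is nonnegative this gives $s\,M(z_s/y_s)\leq(1-s)\phi_s(z)+s$. By Lemma \ref{lem:2.4} the maximizers $\phi_s$ w*-converge to $\phi$, so $\phi_s(z)\to\phi(z)$ and the right-hand side tends to $\phi(z)$. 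Combining the two estimates gives $\phi(z)\leq\liminf_{s\to 0}s\,M(z_s/y_s)$ and $\limsup_{s\to 0}s\,M(z_s/y_s)\leq\phi(z)$, which is the claim.

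The one delicate point is that the displayed ratio is of indeterminate form $0/0$ as $s\to 0$, since the denominator $(1-s)\phi_s(\eta)+s$ and the numerator both vanish; a naive substitution therefore fails. The whole argument hinges on resolving this indeterminacy by a squeeze, where the lower bound is obtained cheaply from the fixed test state $\phi$, while the upper bound exploits both the positivity $\phi_s(\eta)\geq 0$ (to bound the denominator from below by $s$) and the w*-convergence $\phi_s\to\phi$ furnished by Lemma \ref{lem:2.4} (to control the numerator). Smoothness of the cone enters precisely through Lemma \ref{lem:2.4}, which guarantees that $\phi$ is the unique w*-limit of the maximizers.
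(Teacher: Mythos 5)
Your proof is correct and takes essentially the same route as the paper: a squeeze between the test-state lower bound $M(z_s/y_s)\geq \phi(z_s)/\phi(y_s)$ and an upper bound via maximizing states $\phi_s$, with smoothness entering exactly where it does in the paper, namely through the w*-convergence $\phi_s\to\phi$ of Lemma \ref{lem:2.4}. The only (harmless, in fact mildly streamlining) difference is that you compute $M(u/y_s)=1/s$ exactly from Lemma \ref{lem:2.2}, whereas the paper estimates this factor from both sides using the state $\phi$ and a second family of maximizers $\psi_s$, which your version dispenses with altogether.
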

 \begin{proof}
 For $0<s\leq 1$ let $\phi_s\in S(V)$ be such that $M(z_s/y_s) =\phi_s(z_s)/\phi_s(y_s)$. So, $(\phi_s)$ w*-converges to $\phi$  by Lemma \ref{lem:2.4}. Note that 
 \begin{eqnarray*}
 M(z_s/y_s)M(u/y_s)^{-1} & \leq &  \frac{\phi_s(z_s)}{\phi_s(y_s)} \left( \frac{\phi(u)}{\phi(y_s)}\right)^{-1}\\
  & = &\frac{\phi_s(z_s)}{\hat \phi(u)} \frac{\phi(y_s)}{\phi_s(y_s)}\\
   & = & \phi_s(z_s) \frac{\phi((1-s)\eta +su)}{\phi_s((1-s)\eta+ su)}\\
    & \leq & \phi_s(z_s)
 \end{eqnarray*}
 as $\phi(\eta)=0$ and $\phi_s(\eta)\geq 0$ for all $0<s\leq 1$.  The right-hand side of the inequality converges to $\phi(z)$ as $s\to 0$, since $(\phi_s)$ w*-converges to $\phi$. 
 
 On the other hand, if we let $\psi_s\in S(V)$ be such that $M(u/y_s)=\psi_s(u)/\psi_s(y_s)$, then $(\psi_s)$ w*-converges to $\phi$ by taking $z=u$ in Lemma \ref{lem:2.4}. Moreover, 
 \begin{eqnarray*}
 M(z_s/y_s) M(u/y_s)^{-1} & \geq & \frac{\phi(z_s)}{\phi(y_s)} \left(\frac{\psi_s(u)}{\psi_s(y_s)}\right)^{-1}\\
    & = & \frac{\phi(z_s)}{\psi_s(u)}\frac{\psi_s(y_s)}{\phi(y_s)}\\
     &\geq & \phi(z_s),
  \end{eqnarray*}
  as $\psi_s(\eta)\geq 0$. The right-hand side converges to $\phi(z)$ as $s\to 0$, which completes the proof.
 \end{proof}

 \section{Proof of Theorem \ref{thm:1.1}} 
 Define 
 \[
 \mathcal{P}:=\{p\in\partial C\colon M(p/u)=\|p\|_u=1\}.
 \]
 
 \begin{lemma}\label{p'}
 If $(V,C,u)$ is an order unit space, then for each $p\in\mathcal P$ there exists  a unique $p'\in\mathcal{P}$ with $p+p'=u$. 
 \end{lemma}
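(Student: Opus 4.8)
The plan is to observe first that the uniqueness is automatic: if $p+p'=u$ then $p'=u-p$ is completely determined by $p$, so the whole content of the lemma is the existence, namely that $p':=u-p$ actually lies in $\mathcal P$. I would begin by recording that $M(p/u)=1$ forces $p\leq_C u$. Indeed, $\{\beta>0\colon p\leq_C\beta u\}=\{\beta>0\colon \beta u-p\in C\}$ is upward closed and closed (as $C$ is closed and $\beta\mapsto\beta u-p$ is continuous), hence equals $[1,\infty)$; in particular $1$ belongs to it, so $p':=u-p\in C$. Since $\|q\|_u=M(q/u)$ for every $q\in C$ (because $-\lambda u\leq_C q$ is automatic when $q\in C$), it then suffices to verify the two remaining conditions $M(p'/u)=1$ and $p'\in\partial C$.

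For the value of $M(p'/u)$ I would use the variational description underlying \eqref{eq:var}, which via the equivalence ``$q\leq_C\beta u$ iff $\phi(q)\leq\beta$ for all $\phi\in S(V)$'' extends from $C^\circ$ to give, for any $q\in C$, $M(q/u)=\max_{\phi\in S(V)}\phi(q)$; the maximum is attained because $S(V)$ is w*-compact and $\phi\mapsto\phi(q)$ is w*-continuous. Applying this to $q=p'=u-p$ yields $M(p'/u)=\max_{\phi}(1-\phi(p))=1-\min_{\phi\in S(V)}\phi(p)$. As $p\in C$, every state is nonnegative on $p$, so $M(p'/u)\leq 1$, and equality is exactly the statement that some state vanishes at $p$. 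This is where the hypothesis $p\in\partial C$ is used: $p$ is a nonzero boundary point of the closed cone $C$ (note $p\neq 0$ since $M(p/u)=1$), so a Hahn--Banach/supporting-hyperplane separation of $p$ from the open convex set $C^\circ$ produces a nonzero continuous functional $\phi$ which, by the cone structure, satisfies $\phi\geq 0$ on $C$ and $\phi(p)=0$; normalising by $\phi(u)>0$ places it in $S(V)$. Hence $\min_\phi\phi(p)=0$ and $M(p'/u)=1$ (so also $\|p'\|_u=1$).

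Finally, to see $p'\in\partial C$ I would exhibit a state vanishing at $p'$. Since $M(p/u)=\max_{\phi\in S(V)}\phi(p)=1$ and the maximum is attained, there is a state $\phi_1$ with $\phi_1(p)=1$, whence $\phi_1(p')=\phi_1(u)-\phi_1(p)=0$. A nonzero positive functional is strictly positive on $C^\circ$, so a nonzero point of $C$ annihilated by some state cannot lie in the interior; as $p'\neq 0$ (because $M(p'/u)=1$) this forces $p'\in\partial C$. Combining the three facts gives $p'=u-p\in\mathcal P$ with $p+p'=u$, and uniqueness was already noted. I expect the only genuinely non-routine point to be the lower bound $M(p'/u)\geq 1$: it reduces to the existence of a state killing $p$, which is precisely the supporting-functional property of boundary points and the single place where $p\in\partial C$ (rather than merely $p\in C$) is needed.
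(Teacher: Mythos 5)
Your proposal is correct, but it runs the verification through state-space duality, whereas the paper argues directly in the cone. Like you, the paper sets $p':=u-p$ (so uniqueness is immediate), but it then settles everything with one primal observation: if $M(p'/u)<1$, i.e.\ $u-p\leq_C\beta u$ for some $\beta<1$, then $p-\delta u\in C$ with $\delta:=1-\beta>0$, whence $p=\delta u+(p-\delta u)\in C^\circ$ (as $\delta u\in C^\circ$ and $C^\circ+C\subseteq C^\circ$), contradicting $p\in\partial C$; the fact that $p'\in\partial C$ follows from the symmetric observation that $p'\in C^\circ$ would give $p\leq_C(1-\delta)u$ and hence $M(p/u)<1$ --- a point the paper in fact only asserts and which your argument makes explicit. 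You instead extend the variational formula (\ref{eq:var}) to $M(q/u)=\max_{\phi\in S(V)}\phi(q)$ for $q\in C$, derive $M(p'/u)\geq1$ from a Hahn--Banach supporting state at the boundary point $p$, and obtain $p'\in\partial C$ from a state attaining $M(p/u)=1$; all ingredients you invoke (w*-compactness of $S(V)$, the description of $\leq_C$ by states, strict positivity of states on $C^\circ$) are legitimate and indeed used elsewhere in the paper, so your proof is complete. The trade-off: the dual route costs Hahn--Banach separation and compactness where the paper needs only the definition of $M(\cdot/u)$, but it buys a cleaner accounting of exactly where $p\in\partial C$ enters, an explicit proof of $p'\in\partial C$, and it already constructs the supporting functionals that reappear (once smoothness is established) as the $\phi_p$ of Section 4.
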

 \begin{proof}
 Note that $p\leq_C M(p/u)u=u$, so that $w:=u-p\in(\partial C\setminus\{0\})\cap V(p,u)$. So, 
 \[
 M(w/u) :=\inf\{\beta>0\colon u-p\leq_C \beta u\}=\inf\{\beta>0\colon 0\leq_C (\beta-1)u+p\}=1, 
 \]
 as otherwise $p-\delta u\in C$ for some $\delta>0$. This would imply that $p= \delta u+(p-\delta u) \in C^\circ$, as $\delta u\in C^\circ$, which is impossible.  
 Thus, if we let $p':=w$, then clearly $p'$ is unique, $p'\in \mathcal{P}$ and $p+p'=u$. 
 \end{proof}
 Note that $V=\spn(\mathcal{P})$. Indeed, if $v\in V$ is linearly independent of $u$, then $V(u,v)$ is a 2-dimensional subspace with a 2-dimensional closed cone $C(u,v)$. By \cite[A.5.1]{LNBook} there exists $r,s\in\partial C$ such that $C(u,v)=\{\lambda r+\mu s\colon \lambda,\mu\geq 0\}$ and $\spn(r,s)=V(u,v)$. So, if we let $p:=M(r/u)^{-1}r$ and $q:=M(s/u)^{-1}s$, then $p,q\in\mathcal{P}$ and $v\in \spn(p,q)$.  On the other hand, if $v=\lambda u$ with $\lambda\in\R$, then $v=\lambda(p+p')$ for some $p\in\mathcal{P}$ by Lemma \ref{p'}.

 Now let  $(V,C,u)$ be an order unit space with a strictly convex cone and $\dim V\geq 3$. Suppose there exists a bijective antihomogeneous order-antimorphism $g\colon C^\circ\to C^\circ$. Then $C$ is a smooth cone by Theorem \ref{thm:2.3}. Denote by $\phi_p\in S(V)$ the unique supporting functional at $p\in\mathcal{P}$, so $\phi_p(p)=0$ and $\phi_p(p')=\phi_p(u)=1$. 
For $p\in \mathcal{P}$ define the linear form $B(p,\cdot)$ on $V$ by 
 \[
 B(p,v) :=\phi_{p'}(v)\mbox{\quad for all }v\in V.
 \]
 \begin{proposition}\label{symB} 
 If $p,q\in\mathcal{P}$, then $B(p,q)=B(q,p)$. 
 \end{proposition}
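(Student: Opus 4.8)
The plan is to compute $B(p,q)=\phi_{p'}(q)$ by a horofunction limit, then apply the symmetry $S_u$ to recast that limit in a manifestly symmetric form. Throughout I assume $q\neq p'$; the remaining case $q=p'$ is immediate, since then $q'=p$ and both $B(p,q)=\phi_{p'}(p')=0$ and $B(q,p)=\phi_p(p)=0$.

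First I would fix the two families $y_s:=(1-s)p'+su=p'+sp$ and $z_s:=(1-s)q+su=q+sq'$ in $C^\circ$, for $0<s<1$, using $p+p'=u=q+q'$ from Lemma \ref{p'}. Since $\phi_{p'}(p')=0$ and $\phi_{p'}(q)>0$ (as $C$ is strictly convex and smooth, $\phi_{p'}$ vanishes on $C$ only along the ray through $p'$, and $q\neq p'$, while $q,p'\in\mathcal{P}$), Proposition \ref{horo} applied with $\eta=p'$ and $z=q$ gives
\[
B(p,q)=\phi_{p'}(q)=\lim_{s\to0}M(z_s/y_s)\,M(u/y_s)^{-1}.
\]

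Next I would rewrite this using $S_u$. By Lemma \ref{lem:2.1} and $S_u(u)=u$ we have $M(z_s/y_s)=M(S_u(y_s)/S_u(z_s))$ and $M(u/y_s)=M(S_u(y_s)/u)$. The crux is to evaluate $S_u(y_s)$ and $S_u(z_s)$. Here I would use that $y_s$ lies, up to a positive scalar, on the type I geodesic $\gamma(t)=e^tp'+e^{-t}p$ through $u$: writing $s=e^{-2t}$ gives $y_s=e^{-t}\gamma(t)$, and since $p,p'$ are linearly independent and $C$ is strictly convex, Lemma \ref{geo} yields $S_u(\gamma(t))=\gamma(-t)$. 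Antihomogeneity of $S_u$ then gives $S_u(y_s)=e^t\gamma(-t)=p'+\tfrac1sp=\tfrac1s\tilde y_s$, where $\tilde y_s:=(1-s)p+su$. Running the same argument on $\delta(t)=e^tq+e^{-t}q'$ gives $S_u(z_s)=\tfrac1s\tilde z_s$ with $\tilde z_s:=(1-s)q'+su$. Substituting and using the homogeneity $M(\sigma a/\mu b)=\tfrac{\sigma}{\mu}M(a/b)$ together with $M(\tilde y_s/u)=1$ (since $M(p/u)=1$), I obtain
\[
B(p,q)=\lim_{s\to0}s\,M(\tilde y_s/\tilde z_s).
\]

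Finally I would identify the right-hand side as $B(q,p)$. Applying Proposition \ref{horo} with $\eta=q'$ and $z=p$, for which the families are exactly $\tilde z_s=(1-s)q'+su$ and $\tilde y_s=(1-s)p+su$, gives $B(q,p)=\phi_{q'}(p)=\lim_{s\to0}M(\tilde y_s/\tilde z_s)\,M(u/\tilde z_s)^{-1}$, and by Lemma \ref{lem:2.2} one has $M(u/\tilde z_s)=1/s$, so that $B(q,p)=\lim_{s\to0}s\,M(\tilde y_s/\tilde z_s)=B(p,q)$, as desired. The step I expect to be most delicate is the explicit evaluation of $S_u(y_s)$: it hinges on recognizing each segment as a type I geodesic joining the complementary boundary rays $p$ and $p'$ through $u$, and on the uniqueness of such geodesics in a strictly convex cone so that Lemma \ref{geo} applies; the surrounding manipulations are just the homogeneity of $M$ and the normalizations $M(\tilde y_s/u)=1$ and $M(u/\tilde z_s)=1/s$.
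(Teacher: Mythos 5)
Your proof is correct and follows essentially the same route as the paper's: the key step is the identity $S_u\bigl((1-s)p'+su\bigr)=\tfrac{1}{s}\bigl((1-s)p+su\bigr)$ (the paper's $S_u(p_s)=\tfrac{1}{s}p'_s$ with the roles of $p$ and $p'$ swapped), obtained from Lemma \ref{geo} applied to the type I geodesic $t\mapsto e^tp'+e^{-t}p$ through $u$, combined with the limit formula of Proposition \ref{horo} used to express both $B(p,q)$ and $B(q,p)$. Your derivation of that identity via the explicit geodesic parametrization $s=e^{-2t}$ and antihomogeneity is slightly more direct than the paper's rescaling-and-uniqueness argument, and your endgame invokes Lemma \ref{lem:2.2} for $M(u/\tilde z_s)=1/s$ where the paper instead writes the chain using $S_u^2=\mathrm{Id}$ and $M(p_s/S_u(q_s))=M(q_s/S_u(p_s))$, but these are cosmetic variations on the same argument.
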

 \begin{proof}
 Let $p,q\in \mathcal{P}$ and for $0<s\leq 1$ define 
 \begin{align*}
 p_s&:=(1-s)p+su, & p'_s& := (1-s)p'+su,\\
 q_s&:=(1-s)q+su, & q'_s&:= (1-s)q'+su.
 \end{align*}
 We wish to show that $S_u(p_s) =\frac{1}{s}p_s'$ and $S_u(q_s) =\frac{1}{s}q_s'$. By interchanging the roles of $p_s$ and $q_s$ it suffices to prove the first equality. 
 
 Note that if $\beta>0$ is such that $u\leq_C \beta p_s$, then  $(1-\beta s)u\leq_C\beta (1-s)p$, so that $\beta s\geq 1$, as $p\in\partial C$ and $u\in C^\circ$. Thus, $M(u/p_s) =1/s$. The same argument shows that $M(u/p_s')=1/s$. Furthermore, it is easy to check that $M(p_s/u)=1=M(p_s'/u)$, and hence $d_T(u,p_s)=-\log s= d_T(u,p_s')$ for all $0< s\leq 1$. 
 
Let $\delta_s:= M(u/p_s)^{1/2}M(p_s/u)^{-1/2}=1/\sqrt{s}$ and put $x_s:=\delta_s p_s$ and $y_s:=\delta_s p_s'$. Then $M(x_s/u)=M(u/x_s)= 1/\sqrt{s}=M(y_s/u)=M(u/y_s)$. Thus, $x_s$ and $y_s$ are on the unique type I geodesic line $\gamma$ through $u$ in $C^\circ(p,p')$. Let $\gamma\colon\R\to (C^\circ,d_T)$ be the geodesic path with $\gamma=\gamma(\R)$ and $\gamma(0)=u$. As $S_u$ is a $d_T$-isometry and $S_u(u)=u$, we find that    $d_T(u,x_s)=d_T(u,S_u(x_s)) =-\log\sqrt{s}=d_T(u,y_s)$. Using Lemma \ref{geo} and the fact that $x_s\neq y_s$, we conclude that $S_u(x_s)=y_s$. Thus,  $S_u(\delta_s p_s) =\delta_s p_s'$, which shows that $S_u(p_s) = \frac{1}{s} p_s'$. 

Now let $p,q\in\mathcal{P}$ and suppose that $q\neq p'$. Then by Proposition \ref{horo} we have that 
\begin{eqnarray*}
B(p,q) & = & \phi_{p'}(q)\\
  & = & \lim_{s\to 0} M(q_s/p_s')M(u/p_s')^{-1}\\
   & = & \lim_{s\to o} M(q_s/S_u(p_s))M(u/S_u(p_s))^{-1}\\
   & = & \lim_{s\to 0} M(p_s/S_u(q_s))M(p_s/u)^{-1}\\
   & = & \lim_{s\to 0} M(p_s/S_u(q_s)), 
\end{eqnarray*}
where we used the identity $S_u(p_s) =\frac{1}{s} p'_s$   and the fact that $S^2_u =\mathrm{Id}$ (Theorem \ref{thm:symmetries}) in the third equality. 

Likewise, 
\[
B(q,p) = \lim_{s\to 0} M(q_s/S_u(p_s)).
\]
Now using the fact that $M(p_s/S_u(q_s)) = M(q_s/S_u(p_s))$ for all $0<s\leq 1$, we deduce that $B(p,q) =B(q,p)$ if $q\neq p'$. On the other hand, if $q=p'$, then $B(p,q) =0$ and $B(q,p)=0$. \end{proof}
 We now extend $B$ linearly to $V$ by letting
 \[
 B\left(\sum_{i=1}^n \alpha_ip_i,v\right) := \sum_{i=1}^n \alpha_iB(p_i,v)\mbox{\quad for all }v\in V.
 \]
 To see that $B$ is a well-defined bilinear form suppose that $w=\sum_i\alpha_i p_i=\sum_j\beta_jq_j$ for some $\alpha_i,\beta_j\in\R$ and $p_i,q_j\in\mathcal{P}$.
 Write $v=\sum_k\gamma_k r_k$ with $r_k\in\mathcal{P}$. Then by Proposition \ref{symB} we get that 
 \begin{eqnarray*}
  \sum_i \alpha_i B(p_i,v)
   	& = &  \sum_{i,k} \alpha_i\gamma_kB(p_i,r_k)\\
	& = & \sum_{i,k}\gamma_k\alpha_i B(r_k,p_i)\\
	& = & \sum_k \gamma_kB(r_k,w).
 \end{eqnarray*}
 Likewise  $ \sum_j\beta_j B(q_j,v)= \sum_k \gamma_kB(r_k,w)$, which shows that $B$ is a well defined symmetric bilinear form on $V\times V$.  
 
 Let $H:=\spn \{p-p'\colon p\in\mathcal{P}\}$ and $\R u:=\spn(u)$. 
 \begin{lemma} \label{H+Ru} We have that  $V=H\oplus \R u$ (vector space direct sum), and $H$ is a closed subspace of $(V,\|\cdot\|_u)$.   
 \end{lemma}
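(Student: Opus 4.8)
The plan is to produce a single continuous linear functional $\ell$ on $V$ that vanishes on $H$ and is nonzero on $u$; this one object will simultaneously deliver the direct-sum decomposition and the closedness of $H$. The natural candidate is
\[
\ell(v):=\tfrac12 B(u,v)\qquad(v\in V),
\]
where $B$ is the symmetric bilinear form constructed above. The spanning part $V=H+\R u$ I would dispose of first and cheaply: each $p\in\mathcal P$ can be written $p=\tfrac12(p-p')+\tfrac12 u\in H+\R u$, and since $V=\spn(\mathcal P)$ the subspace $H+\R u$ then contains, hence equals, $V$.

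Next I would record the few values of $B$ that drive everything. By definition $B(p,u)=\phi_{p'}(u)=1$ for every $p\in\mathcal P$, and because $p'+p=u$ forces $(p')'=p$ by the uniqueness in Lemma~\ref{p'}, also $B(p',u)=\phi_{(p')'}(u)=\phi_p(u)=1$. Writing $u=p+p'$ and using linearity in the first argument then gives $B(u,u)=B(p,u)+B(p',u)=2$ and $B(p-p',u)=B(p,u)-B(p',u)=0$. Invoking the symmetry of $B$ from Proposition~\ref{symB}, the latter reads $B(u,p-p')=0$, so $\ell$ annihilates every generator $p-p'$ of $H$; thus $\ell|_H=0$, while $\ell(u)=\tfrac12 B(u,u)=1$.

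From these two facts the algebraic decomposition is immediate. If $\lambda u\in H$ then $\lambda=\ell(\lambda u)=0$, so $H\cap\R u=\{0\}$, and together with the spanning statement this gives $V=H\oplus\R u$. I would also note in passing that $\ker\ell=H$ exactly: one inclusion is $\ell|_H=0$, and conversely any $v\in\ker\ell$ decomposes as $v=h+\lambda u$ with $0=\ell(v)=\lambda$, forcing $v=h\in H$.

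Finally, for closedness I would use that $\ell$ is continuous. Expanding $u=p+p'$ in the definition of $B$ yields $\ell=\tfrac12(\phi_{p'}+\phi_p)$, a sum of two states and hence a bounded (continuous) functional on $(V,\|\cdot\|_u)$; consequently $H=\ker\ell$ is a closed hyperplane. The one genuinely substantive step is the identity $\ell|_H=0$, and this is precisely where the symmetry $B(u,p-p')=B(p-p',u)$ of Proposition~\ref{symB} is indispensable: $B(u,p-p')$ is not manifestly zero, whereas $B(p-p',u)$ is immediate from $\phi_{p'}(u)=\phi_p(u)=1$. Everything else is routine, so I expect the whole lemma to follow once the right functional $\tfrac12 B(u,\cdot)$ is identified as the projection onto the $\R u$-component.
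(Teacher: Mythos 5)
Your proof is correct, and its core is the same as the paper's: the paper defines $\psi_u(v):=B(v,u)$, shows $H=\ker\psi_u$ with $\psi_u(u)=2$, and obtains both the direct sum and, from boundedness of $\psi_u$, the closedness of $H$; by the symmetry of the extended $B$, your $\ell$ is just $\tfrac12\psi_u$. The differences are minor but real. Because the paper puts $u$ in the \emph{second} slot, $\psi_u(p-p')=B(p,u)-B(p',u)=\phi_{p'}(u)-\phi_p(u)=0$ is immediate from the definition of $B$, and Proposition~\ref{symB} enters only through the well-definedness of the linear extension of $B$; you put $u$ in the \emph{first} slot, so you must invoke symmetry explicitly to annihilate the generators $p-p'$, exactly as you flagged. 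In return, your continuity argument is cleaner: expanding $u=p+p'$ gives $\ell=\tfrac12(\phi_p+\phi_{p'})$, an average of two states and hence bounded of norm $1$, whereas the paper estimates $|\psi_u(v)|\le 2\|v\|_u$ using the representation $v=\alpha p+\beta p'$ together with $\|v\|_u=\max\{|\alpha|,|\beta|\}$ (its equation (\ref{unorm})), which leans on the planar structure coming from strict convexity. Your spanning step likewise bypasses the two-term representation $v=\alpha p+\beta p'$, using only $V=\spn(\mathcal{P})$ and $p=\tfrac12(p-p')+\tfrac12 u$ (with $(p')'=p$ justified correctly by the uniqueness in Lemma~\ref{p'}). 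Both arguments are complete; yours trades the paper's explicit norm estimate for the observation that the projection onto the $\R u$-component is a convex combination of states.
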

\begin{proof}
 Note that for each $v\in V$ there exists $p\in\mathcal{P}$ and $\alpha,\beta\in\R$ such that $v=\alpha p+\beta p'$. So, 
 \begin{equation}\label{pp'}
 v=\frac{1}{2}(\alpha-\beta)(p-p') +\frac{1}{2}(\alpha+\beta)u,
 \end{equation}
 by Lemma \ref{p'}. This shows that $V=H+\R u$. Now let $\psi_u\colon V\to\R$ be given by $\psi_u(v) :=B(v,u)$ for all $v\in V$. Note that if $v=p-p'$, then 
 \[
 \psi_u(v) = B(p,u)-B(p',u) = \phi_{p'}(u) -\phi_p(u) = 1-1 =0,
 \]
 and hence $H\subseteq \mathrm{ker}(\psi_u)$. Moreover, $B(u,u) =B(p,u)+B(p',u)=2$. Also for $v=\alpha s +\beta u$ with $s=p-p'\in H$ we have that $\psi_u(v) =2\beta =0$ if and only if $\beta =0$. Thus, $H= \mathrm{ker}(\psi_u)$, which shows that $V=H\oplus \R u$. 
 
 To see that $H$ is closed it suffices to show that $\psi_u$ is bounded with respect to $\|\cdot\|_u$. Let $v=\alpha p+\beta p'\in V$. Then 
 \begin{equation}\label{unorm}
 \|v\|_u=\inf\{\lambda >0\colon -\lambda u\leq_C \alpha p+\beta p'\leq_C \lambda u\} =\max\{|\alpha|,|\beta|\}. 
 \end{equation}
 It follows that  
 \[
 |\psi_u(v)|\leq |\alpha|\psi_u(p)+|\beta|\psi_u(p') = |\alpha|+|\beta|\leq 2\|v\|_u,
 \]
 and hence $\psi_u$ is bounded. 
 \end{proof}
 Define a bilinear form  $(x\mid y)$ on $H$ by 
 \[
 (x\mid y):=\frac{1}{2}B(x,y)\mbox{\quad for all }x,y\in H.
 \]
 
 \begin{proof}[Proof of Theorem \ref{thm:1.1}]
 We will first show that $(H,(\cdot\mid\cdot))$ is a Hilbert space. Note that if $x\in H$, then there exists $p\in\mathcal{P}$ and $\alpha\in \R$ such that $x=\alpha(p-p')$ by (\ref{pp'}). 
 Clearly 
 \begin{equation}\label{norms}
 \|x\|_2^2=(x\mid x)= \frac{1}{2}\left(\alpha^2B(p,p-p') -\alpha^2B(p',p-p')\right) = \frac{\alpha^2}{2}(1+1) = \alpha^2 =\|x\|_u^2, 
 \end{equation}
 by (\ref{unorm}). It follows that $(x\mid x)\geq 0$ for all $x\in H$, $(x\mid x)=0$ if and only if $x=0$, and $(H,(\cdot\mid\cdot))$ is a Hilbert space, as  $H$ is closed in 
 $(V, \|\cdot\|_u)$. 

 We already know from Lemma \ref{H+Ru} that $V=H\oplus\R u$, where $(H,(\cdot\mid\cdot))$ is a Hilbert space. 
 Note that if $x=\alpha(p-p')\in H$, then $\|x+\beta u\|_u =\max\{|\alpha+\beta|,|\alpha -\beta|\}=|\alpha|+|\beta|=\|x\|_u +|\beta|$ by (\ref{unorm}). 
So, we deduce from equality  (\ref{norms}) that 
 \[
 \|x+\beta u\|_u= \|x\|_2+|\beta|\mbox{\quad for $x\in H$ and $\beta\in\R$}.
 \]
 
 It remains to show that $\{a^2\colon a\in V\} =C$, where the Jordan product is given by (\ref{spin}). Note that if $a=x+\sigma u$ where $x=\delta(p-p')\in H$ and $\sigma,\delta\in\R$, then 
 \begin{eqnarray*}
a^2 & = & 2\sigma x+((x\mid x)+\sigma^2)u\\
  & = & 2\sigma\delta(p-p') +\left(\frac{\delta^2}{2}B(p-p',p-p') +\sigma^2\right) u\\
   & = & 2\sigma\delta(p-p') + (\delta^2+\sigma^2)(p+p')\\
   & = & (\sigma+\delta)^2p + (\sigma-\delta)^2p'\in C.
 \end{eqnarray*}
 Conversely, if $v\in C$, then $v=\lambda p+\mu p'$ for some $\lambda,\mu\geq 0$ and $p,p'\in\mathcal{P}$. Let 
 \[
 w:=\sqrt{\lambda}p+\sqrt{\mu}p' = \frac{1}{2}\left((\sqrt{\lambda}-\sqrt{\mu})(p-p')+ (\sqrt{\lambda}+\sqrt{\mu})(p+p')\right).\] 
 So, 
 \[
 w^2 =\frac{1}{4}\left(2(\sqrt{\lambda}-\sqrt{\mu})(\sqrt{\lambda}+\sqrt{\mu})(p-p') + ((\sqrt{\lambda}-\sqrt{\mu})^2 + (\sqrt{\lambda}+\sqrt{\mu})^2)(p+p')\right) = \lambda p+\mu p'=v,
 \]
 which shows that $v\in\{a^2\colon a\in V\}$. 
 \end{proof}

\footnotesize

\normalsize
\end{document}